\newtheorem{thm}{Theorem}[section]
\newtheorem{prop}[thm]{Proposition}
\newtheorem{lem}[thm]{Lemma}
\newtheorem{cor}[thm]{Corollary}
\theoremstyle{definition}
\newtheorem{definition}[thm]{Definition}
\newtheorem{example}[thm]{Example}
\newtheorem{rem}[thm]{Remark}
\numberwithin{equation}{section}
\newcommand{\M}{\mathscr{M}}
\newcommand{\X}{\mathcal{X}}
\newcommand{\V}{\mathscr{V}}
\newcommand{\B}{\mathcal{B}}
\newcommand{\zz}{\mathbb{Z}}
\newcommand{\qq}{\mathbb{Q}}
\newcommand{\Cp}{\mathscr{C}}
\newcommand{\Pp}{\mathscr{P}}
\newcommand{\p}{\mathbb{P}}
\newcommand{\pp}{\mathbb{P}}
\renewcommand{\H}{\mathcal{H}}
\newcommand{\Hp}{\mathscr{H}}
\renewcommand{\P}{\mathcal{P}}
\newcommand{\E}{\mathcal{E}}
\renewcommand{\O}{\mathcal{O}}
\newcommand{\Mg}{\M_g}
\newcommand{\Ub}{\mathcal{V}}
\renewcommand{\gg}{\mathbb{G}}
\renewcommand{\tilde}{\widetilde}
\DeclareMathOperator{\Id}{Id}
\DeclareMathOperator{\ch}{ch}
\DeclareMathOperator{\GL}{GL}
\DeclareMathOperator{\SL}{SL}
\DeclareMathOperator{\Pic}{Pic}
\DeclareMathOperator{\Sym}{Sym}
\DeclareMathOperator{\PGL}{PGL}
\DeclareMathOperator{\BSL}{BSL}
\DeclareMathOperator{\BPGL}{BPGL}
\DeclareMathOperator{\BGL}{BGL}
\DeclareMathOperator{\gen}{gen}
\begin{document}
\title{The integral Picard groups of low-degree Hurwitz spaces}

\author{Samir Canning, Hannah Larson}
\thanks{During the preparation of this article, S.C. was partially supported by NSF RTG grant DMS-1502651. H.L. was supported by the Hertz Foundation and NSF GRFP under grant DGE-1656518. This work will be part of S.C.'s and H.L.'s Ph.D. theses.}
\email{srcannin@ucsd.edu}
\email{hlarson@stanford.edu}
\subjclass[2010]{14C15, 14C17}
\maketitle
\begin{abstract}
We compute the Picard groups with integral coefficients of the Hurwitz stacks parametrizing degree $4$ and $5$ covers of $\p^1$. As a consequence, we also determine the integral Picard groups of the Hurwitz stacks parametrizing simply branched covers. For simple branching, the Picard groups are finite, with order depending on the genus.
\end{abstract}
\section{Introduction}
Let $\Hp_{k,g}$ be the Hurwitz stack parametrizing degree $k$ covers by genus $g$ curves of $\p^1$, up to automorphisms of the target $\p^1$. Let $\Hp^{s}_{k,g} \subseteq \Hp_{k,g}$ be the open substack parametrizing simply branched covers. The Hurwitz space Picard rank conjecture posits that $\Pic(\Hp^{s}_{k,g})\otimes \qq=0$. 
For $k \geq 4$, the complement of $\mathscr{H}_{k,g}^s \subset \mathscr{H}_{k,g}$ consists of two irreducible divisors: $D$, parametrizing covers with two points of ramification in the same fiber (pictured left) and $T$, parametrizing covers with a point of triple ramification (pictured right).

\begin{figure}[h!] \label{TD}
\includegraphics[width=4.75in]{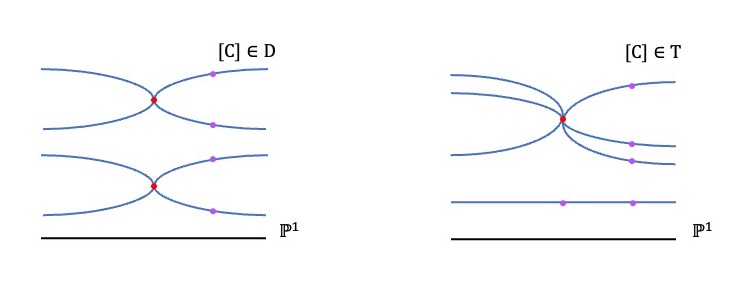}
\caption{Components of the complement of $\Hp_{k,g}^s \subset \Hp_{k,g}$}
\end{figure}

In \cite{DP}, Deopurkar--Patel prove that for $k \geq 4$, the classes of $T$ and $D$ are linearly independent in $\Pic(\Hp_{k,g})\otimes \qq$. Note that $D$ is empty when $k = 3$.
For $k \geq 4$, the Picard rank conjecture is then equivalent to $\Pic(\mathscr{H}_{k,g}) \otimes \qq \cong \qq^{\oplus 2}$.
The conjecture is known by work of Stankova-Frenkel and Deopurkar--Patel for $k\leq 5$ \cite{Stankova-Frenkel,DP}, and for $k>g-1$ by work of Mullane \cite{Mu}.

In this paper, we will study $\Pic(\Hp_{k,g})$ and $\Pic(\Hp_{k,g}^s)$ with \emph{integral} coefficients. Much less is known in this case: Arsie--Vistoli \cite{AV} computed $\Pic(\Hp_{2,g})$, and Bolognesi--Vistoli \cite{BV} computed $\Pic(\Hp_{3,g})$. In both cases, there are torsion classes depending on the genus. Our main theorem shows that this torsion phenomenon does not extend to $k=4,5$ when $g\geq 3$. 

\begin{thm}\label{Pic}
For $g \geq 2$, the integral Picard groups of the Hurwitz stacks are as follows.
\begin{enumerate}
    \item We have
    \[
    \Pic(\Hp_{4,g})=\begin{cases} \zz \oplus \zz/10\zz & \text{if $g = 2$} \\ \zz\oplus \zz & \text{if $g \geq 3$.} \end{cases}
    \]
    \item We have 
    \[
    \Pic(\Hp_{5,g})=\begin{cases} \zz \oplus \zz/10\zz &\text{if $g = 2$} \\ \zz\oplus \zz & \text{if $g \geq 3$.}\end{cases}
    \]
\end{enumerate}
\end{thm}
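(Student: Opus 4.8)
The plan is to realize each Hurwitz stack $\Hp_{k,g}$ as an explicit quotient stack, following the approach of the theory of $\mathrm{Tschirnhausen}$ bundles (as in work of Casnati--Ekedahl, Deopurkar--Patel, and Landesman--Vakil--Wood). For $k = 4, 5$, a degree $k$ cover $C \to \p^1$ is determined by its Tschirnhausen bundle $E$, a rank $k-1$ vector bundle on $\p^1$, together with extra linear-algebraic data: for $k = 4$, a section of a bundle built from $\Sym^2 E$ and $\det E$ realizing $C$ as the base locus of a pencil of quadrics in $\p(E)$; for $k = 5$, the Buchsbaum--Eisenbud structure theorem presents $C$ by the $4\times 4$ Pfaffians of a $5 \times 5$ skew matrix of sections. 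In each case one gets a presentation $\Hp_{k,g} = [U/G]$ where $G$ is an algebraic group (an extension of $\PGL_2$ acting on $\p^1$ by automorphisms of the relevant bundles, so built from $\GL_1$'s, $\GL_2$'s acting on spaces of binary forms), and $U$ is an open subscheme of an affine space of sections whose complement has high codimension. Then $\Pic(\Hp_{k,g}) = \Pic([U/G])$, and by the standard excision/homotopy machinery for Picard groups of quotient stacks, $\Pic([U/G])$ fits in an exact sequence coming from $\Pic(BG) \to \Pic([U/G]) \to \Pic(\text{something of the boundary})$, and since $U$ is a large open in affine space, $\Pic([U/G])$ surjects from $\Pic(BG)$ with kernel/cokernel controlled by the codimensions of the loci removed.

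The key computational steps, in order: (1) Fix the presentation $\Hp_{k,g} = [U/G_{k,g}]$, keeping careful track of how $g$ enters — the degree of the Tschirnhausen bundle is $g + k - 1$, so the relevant group $G_{k,g}$ and the ambient affine space both depend on $g$; the splitting type of $E$ varies, so one actually stratifies by splitting type and works on the open stratum or uses a universal family over a stack of bundles. (2) Compute $\Pic(BG_{k,g})$, which amounts to computing characters of $G_{k,g}$; since $G_{k,g}$ is built from copies of $\GL_n$'s, $\Pic(BG_{k,g})$ is a finitely generated abelian group, and one tracks which characters are trivialized. (3) Use the excision sequence $\Pic([U/G]) \to \Pic([\mathbb{A}^N/G]) \to \bigoplus \zz\cdot[Z_i]$, where the $Z_i$ are the codimension-one components of the complement — but the complement of $U$ has codimension $\geq 2$ for $g$ large, so $\Pic([U/G]) = \Pic([\mathbb{A}^N/G]) = \Pic(BG)$; for small $g$ (here $g = 2$) a codimension-one locus appears, contributing the torsion. (4) Identify the two free generators: one is the Hodge class $\lambda$ (or a boundary class), the other comes from a character of the automorphism group of the bundle $E$; matching these with $T$ and $D$ (using that the paper already knows the rational Picard rank is 2) pins down the rank. (5) Determine the torsion: show there is no torsion for $g \geq 3$ by checking that all characters of $G_{k,g}$ either survive as free classes or are killed, and compute the $\zz/10\zz$ for $g = 2$ by analyzing the extra codimension-one degeneration locus and the relations it imposes among characters; the $10$ should emerge from an explicit weight computation (a $\gcd$ or determinant of an integer matrix of weights, with $10 = 2 \cdot 5$ reflecting the degrees $k$ and the specific binary-form representations in play).

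I expect the main obstacle to be step (1) combined with step (3): getting an honest global quotient presentation of $\Hp_{k,g}$ in which the complement of $U$ genuinely has codimension $\geq 2$ for all $g \geq 3$ — the splitting type of the Tschirnhausen bundle is not constant, so one must either work over the (Artin) stack of vector bundles on $\p^1$ of the right rank and degree and carefully bound the codimension of the unstable strata together with the non-smooth/non-reduced locus of the section space, or find a clever uniform presentation. Controlling the codimension of the locus where the cover acquires worse-than-nodal singularities (or fails to be a connected genus-$g$ curve) uniformly in $g$ is the crux; once that is done, the Picard group collapses to $\Pic(BG_{k,g})$, and the remaining work is the essentially mechanical (though delicate) character computation that yields $\zz \oplus \zz$ for $g \geq 3$ and the extra $\zz/10\zz$ at $g = 2$. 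A secondary subtlety is correctly handling the $g = 2$ case, where $C$ is automatically hyperelliptic and the extra automorphisms/degenerations produce the torsion — this likely requires a separate, more hands-on analysis consistent with Arsie--Vistoli's and Bolognesi--Vistoli's low-genus computations.
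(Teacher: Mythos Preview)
Your overall strategy --- use the Casnati--Ekedahl/Casnati structure theorems to encode $\Hp_{k,g}$ via linear-algebraic data on $\pp^1$, then compute Picard groups by excision on an associated quotient stack --- is exactly the paper's approach. However, several steps as you describe them would not go through.

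First, the locus of ``bad'' sections (those not defining a smooth connected genus-$g$ cover) inside the affine space of sections is \emph{never} of codimension $\geq 2$: it always contains a discriminant hypersurface. The paper separates two distinct codimension questions. On one hand, it passes to an open $\Hp_{k,g}' \subset \Hp_{k,g}$ lying over the locus $\mathscr{B}_{k,g}' \subset \mathscr{B}_{k,g}$ of bundle pairs $(E,F)$ where the relevant section bundle has vanishing $h^1$; the complement $\Hp_{k,g} \smallsetminus \Hp_{k,g}'$ has codimension $\geq 2$ for $g \neq 3$, and this is proved by quoting Deopurkar--Patel's enumeration of the divisorial Maroni/Casnati--Ekedahl strata, not by a uniform bound. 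On the other hand, $\Hp_{k,g}'$ sits as an open in a vector bundle $\mathscr{X}_{k,g}'$ over $\mathscr{B}_{k,g}'$, and the complement $\mathscr{D}_{k,g} = \mathscr{X}_{k,g}' \smallsetminus \Hp_{k,g}'$ \emph{is} a divisor. Its class supplies exactly one relation, cutting $\Pic(\mathscr{B}_{k,g}) \cong \zz^3$ (generators $\epsilon a_1, a_2', b_2'$) down to a quotient of $\zz^2$. Once $\Pic(\Hp_{k,g})$ is known to be a quotient of $\zz^2$, the rank $\geq 2$ input from Deopurkar--Patel immediately forces $\Pic \cong \zz^2$; your proposed character-by-character check for the absence of torsion is unnecessary.

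Second, your diagnosis of the $g=2$ torsion is wrong. The $\zz/10\zz$ is not produced by an extra codimension-one stratum in the section space; rather, one shows that $\lambda$ and $\epsilon a_1$ generate, and $10\lambda = 0$ simply because $\lambda$ is pulled back from $\Pic(\M_2) = \zz/10\zz$. The substantive step is proving that $\Pic(\M_2) \to \Pic(\Hp_{k,2})$ is \emph{injective}, which the paper does by factoring $\Hp_{k,2} \to \M_2$ through a Grassmann bundle over the rigidified Picard variety $\mathscr{P}^k$ and analyzing the base-point locus. (Incidentally, the genus requiring separate treatment because a Maroni stratum becomes divisorial is $g=3$, not $g=2$.)
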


We also provide explicit line bundles generating $\Pic(\Hp_{k,g})$ (Sections \ref{gb3}, \ref{gb4}, and \ref{gb5}). One of those line bundles is the determinant of the Hodge bundle, which is pulled back from the moduli space of curves. The torsion in the case $g=2$ arises from the torsion in the Picard group of the moduli space of genus $2$ curves, as the first Chern class $\lambda$ of the Hodge bundle is $10$-torsion when $g=2$ \cite{V3}.

\begin{rem}
This $10$-torsion phenomenon is also present when $k = 3$, namely we shall prove $\Pic(\Hp_{3,2}) = \zz/10\zz$, correcting the $g = 2$ case of \cite{BV}.
\end{rem}

The classes of the divisors $T$ and $D$ have been previously computed \cite{DP,part2}. Using these computations, we determine the integral Picard groups of the simply branched Hurwitz spaces. They are, of course, torsion.
\begin{cor} \label{mc}
For $g \geq 2$, the integral Picard groups of the simply branched Hurwitz stacks are as follows:
\begin{enumerate}
    \item We have
    \[ \Pic(\Hp_{3,g}^s) =
    \begin{cases} \zz/2\zz & \text{if $g = 2$} \\ \zz/(4g + 6)\zz \oplus \zz/3\zz & \text{if $g \geq 3$ odd} \\
    \zz/(8g + 12)\zz \oplus \zz/3\zz  & \text{if $g \geq 3$ even} 
    \end{cases} \]
    \item We have
    \[\Pic(\Hp_{4,g}^s) = \begin{cases} \zz/18\zz \oplus \zz/2\zz &\text{if $g = 2$} \\ \zz/(8g+20)\zz \oplus \zz/12\zz & \text{if $g \geq 3$ odd} \\
    \zz/(4g+10)\zz \oplus \zz/12\zz & \text{if $g \geq 3$ even.} 
    \end{cases} \]
    \item We have
    \[\Pic(\Hp_{5,g}^s) = \begin{cases}
    \zz/44\zz \oplus \zz/2\zz &\text{if $g = 2$} \\
    \zz/(4g + 14)\zz \oplus \zz/12\zz & \text{if $g \geq 3$ odd} \\
    \zz/(8g+28)\zz \oplus \zz/12\zz & \text{if $g \geq 3$ even.}
    \end{cases}\]
\end{enumerate}
\end{cor}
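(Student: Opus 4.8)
The plan is to use the excision exact sequence for Picard groups of smooth Deligne--Mumford stacks. Since $\Hp_{k,g}$ is smooth and $\Hp^s_{k,g}\subseteq\Hp_{k,g}$ is the complement of the irreducible divisor $T$ when $k=3$ (recall $D=\emptyset$ in this case) and of the two irreducible divisors $T\cup D$ when $k\in\{4,5\}$, we obtain an exact sequence
\[
\zz^{\oplus m}\xrightarrow{\ \varphi\ }\Pic(\Hp_{k,g})\longrightarrow\Pic(\Hp^s_{k,g})\longrightarrow 0,
\]
where $m=1$ and $\varphi(1)=[T]$ for $k=3$, and $m=2$ and $\varphi(e_1)=[T]$, $\varphi(e_2)=[D]$ for $k=4,5$. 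Hence $\Pic(\Hp^s_{k,g})\cong\coker(\varphi)$, and the corollary reduces to a finite group-theoretic computation, given (a) $\Pic(\Hp_{k,g})$ together with an explicit generating set, and (b) the classes $[T]$ and $[D]$ written in that generating set.

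For input (a): when $k=4,5$ this is exactly Theorem \ref{Pic}, together with the explicit generators identified in Sections \ref{gb4} and \ref{gb5} (for $g\geq 3$ the group $\zz^{\oplus 2}$; for $g=2$ the group $\zz\oplus\zz/10\zz$, whose torsion comes from $\lambda$). When $k=3$ and $g\geq3$ we invoke Bolognesi--Vistoli's computation of $\Pic(\Hp_{3,g})$ \cite{BV}, which carries genus-dependent torsion, and when $k=3$, $g=2$ we use the corrected value $\Pic(\Hp_{3,2})=\zz/10\zz$ from the Remark. For input (b): the divisor classes $[T]$ and $[D]$ have already been computed in \cite{DP,part2}; I would re-express them in the chosen integral generators of $\Pic(\Hp_{k,g})$, producing an integer matrix $M$ that represents $\varphi$ modulo the relations of the target. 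Then $\coker(\varphi)$ is read off from the Smith normal form of the block matrix built from $M$ and the relation matrix of $\Pic(\Hp_{k,g})$; carrying this out case by case yields the three lists in the statement.

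The case division in the corollary mirrors the structure of this computation. For $g\geq 3$ and $k=4,5$ the target is $\zz^{\oplus 2}$, the classes $[T]$ and $[D]$ span a finite-index subgroup, and the two invariant factors of the associated $2\times 2$ matrix give the two cyclic summands; the dependence on the parity of $g$ enters through the genus-dependent coefficients of $[T]$ (and, for $k=3$, additionally through the genus-dependent torsion of $\Pic(\Hp_{3,g})$), which changes a relevant gcd or determinant by a factor of $2$. For $g=2$ the target already contains the $\zz/10\zz$ coming from $\lambda$, so $[T]$ and $[D]$ are recorded modulo that torsion, and the cokernel of $\zz^{\oplus m}$ mapping into a group of the form $\zz\oplus\zz/10\zz$ (or $\zz/10\zz$ itself, when $k=3$) works out to $\zz/18\zz\oplus\zz/2\zz$ for $k=4$, to $\zz/44\zz\oplus\zz/2\zz$ for $k=5$, and to $\zz/2\zz$ for $k=3$.

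I expect the main obstacle to be organizational rather than conceptual: one must transport the formulas for $[T]$ and $[D]$ from the bases used in \cite{DP,part2} --- where they may be recorded in tautological classes such as $\lambda$ or classes built from the Tschirnhausen bundle, and possibly only with rational coefficients --- into the integral generating sets of Sections \ref{gb3}--\ref{gb5} without losing any integral or torsion information, since it is exactly this information that separates, say, $\zz/(4g+10)\zz$ from $\zz/(8g+20)\zz$. Once $M$ is correctly written down in the right integral basis, the remainder --- reducing to Smith normal form and sorting the outcomes by the parity of $g$ and by $g=2$ versus $g\geq 3$ --- is routine.
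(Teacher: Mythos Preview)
Your proposal is correct and takes essentially the same approach as the paper: excision reduces $\Pic(\Hp_{k,g}^s)$ to the cokernel of $[T]$ (and $[D]$ for $k\geq 4$) in $\Pic(\Hp_{k,g})$, and the paper carries out exactly the Smith normal form computation you describe in the generators $\epsilon a_1, a_2'$ using the formulas for $[T],[D]$ from \cite{DP2,part2}. Two minor clarifications: the parity-of-$g$ dependence arises from the integrality parameter $\epsilon$ in the generator $\epsilon a_1$ rather than from the coefficients of $[T],[D]$ themselves, and for $k=3$ the paper uses its own presentation of $\Pic(\Hp_{3,g})$ from Section~\ref{trigonal} (as a quotient of $\zz(\epsilon a_1)\oplus\zz a_2'$) rather than the \cite{BV} presentation, since $[T]$ is already recorded in those generators.
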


The paper is structured as follows. In Section \ref{hurwitzstacks}, we define the stacks $\Hp_{k,g}$ and their closely related counterparts $\H_{k,g}$. 
When $k \leq 5$, these stacks have a nice relationship with stacks parametrizing pairs of vector bundles on $\pp^1$-fibrations, respectively pairs of vector bundles on $\pp^1$-bundles (the former may not have a relative degree $1$ line bundle; this distinction is important for results with integral coefficients). We construct these stacks of vector bundles on $\p^1$-fibrations, respectively $\pp^1$-bundles, in Section \ref{p1bundles}, compute their integral Picard groups and explain how they are related to each other. 
In Section \ref{trigonal}, we calculate $\Pic(\Hp_{3,g})$, which is originally due to Bolognesi--Vistoli, in a different way. This new perspective is used to prove Corollary \ref{mc}(1).
Along the way, we also obtain results in genus $2$ that will be useful in Sections \ref{tetragonal} and \ref{pentagonal}.
In Section \ref{tetragonal}, we prove Theorem \ref{Pic}(1) and from it Corollary \ref{mc}(2). 
In Section \ref{pentagonal}, we prove Theorem \ref{Pic}(2) and Corollary \ref{mc}(3).

\subsection*{Acknowledgments} We are grateful to our advisors, Elham Izadi and Ravi Vakil, respectively, for the many helpful conversations. We are grateful to Anand Deopurkar and Anand Patel for enlightening discussions about their work \cite{DP,DP2}. We thank Andrea Di Lorenzo for his comments and suggestions.

\section{Hurwitz Stacks}\label{hurwitzstacks}
We say a morphism $P\rightarrow S$ is a $\pp^1$-fibration if it is a flat, proper, finitely presented morphism of schemes whose geometric fibers are isomorphic to $\p^1$. We define the unparametrized Hurwitz stack $\Hp_{k,g}$ of degree $k$, genus $g$ covers of $\pp^1$ to be the stack whose objects over a scheme $S$ are of the form $(C\rightarrow P\rightarrow S)$ where $P\rightarrow S$ is a $\pp^1$-fibration, $C\rightarrow P$ is a finite, flat, finitely presented morphism of constant degree $k$, and the composition $C\rightarrow S$ is smooth with geometrically connected fibers. We do not impose the condition that our covers $C\rightarrow \p^1$ be simply branched. In the case $k=3$, $\Hp_{3,g}$ is the stack $\mathcal{T}_g$ from \cite{BV}.

The parametrized Hurwitz scheme $\Hp_{k,g}^{\dagger}$ is defined similarly, except $P\rightarrow S$ is replaced by $\p^1_S$. Therefore, the unparametrized Hurwitz stack is the $\PGL_2$ quotient of the parametrized Hurwitz scheme.
There is also a natural action of $\SL_2$ on $\Hp_{k,g}^\dagger$ (via $\SL_2 \subset \GL_2 \rightarrow \PGL_2$).
\begin{center}
    \textit{We shall use script font $\Hp_{k,g} := [\Hp_{k,g}^\dagger/\PGL_2]$ for the $\PGL_2$ quotient,  \\
    and caligraphic font $\H_{k,g} := [\Hp_{k,g}^\dagger/\SL_2]$ for the $\SL_2$ quotient.}
\end{center}

Explicitly, the $\SL_2$ quotient $\H_{k,g}$ is the stack whose objects over a scheme $S$ are families $(C\rightarrow P\rightarrow S)$ where $P = \pp V \rightarrow S$ is the projectivization of a rank $2$ vector bundle $V$ with trivial determinant, $C\rightarrow P$ is a finite flat finitely presented morphism of constant degree $k$, and the composition $C\rightarrow S$ has smooth fibers of genus $g$.
The benefit of working with $\H_{k,g}$ is that the $\SL_2$ quotient is equipped with a universal $\pp^1$-bundle $\P \to \H_{k, g}$ that has a relative degree one line bundle $\O_{\P}(1)$ (a $\pp^1$-fibration does not).

The Hurwitz stacks come with universal diagrams 
\begin{equation}\label{univdiagram}
\begin{tikzcd}
\Cp \arrow{r}{\alpha} \arrow{rd}[swap]{f} & \Pp \arrow{d}{\pi} \\
& \Hp_{k,g},
\end{tikzcd}
\end{equation}
where $\Cp\rightarrow \Hp_{k,g}$ is the universal curve, $\Cp\rightarrow \Pp$ is the universal degree $k$ cover, and $\Pp\rightarrow \Hp_{k,g}$ is the universal $\p^1$-fibration. One can also form the analogous diagram for $\H_{k,g}$. We set 
\[
\lambda:=c_1(f_*\omega_f),
\]
which is pulled back from the moduli space of curves $\Mg$.
\section{Stacks of vector bundles on $\p^1$}\label{p1bundles}
In this section, we discuss these stacks of vector bundles on $\p^1$-fibrations and $\p^1$-bundles, and compute their Picard groups. 

\begin{definition}\label{Stacks}
Let $r,d$ be nonnegative integers. 
\begin{enumerate}
    \item The objects of $\V_{r,d}$ are pairs $(P\rightarrow S, E)$ where $P\rightarrow S$ is a $\pp^1$-fibration over a $k$-scheme $S$ and $E$ is a locally free sheaf of rank $r$ on $P$ whose restriction to each of the fibers of $P\rightarrow S$ is globally generated of degree $d$. A morphism between objects $(P\rightarrow S, E)$ and $(P'\rightarrow S', E')$ is a Cartesian diagram
    \[
    \begin{tikzcd}
P' \arrow[d] \arrow[r, "F"] & P \arrow[d] \\
S' \arrow[r]                & S          
\end{tikzcd}
    \]
    together with an isomorphism $\phi:F^*E\rightarrow E'$.
    \item The objects of $\Ub_{r,d}$ are triples $(S, V, E)$ where $S$ is a $k$-scheme, $V$ is a rank $2$ vector bundle on $S$ with trivial determinant, and $E$ is a rank $r$ vector bundle on $\pp V$ whose restrictions to the fibers of $\pp V \rightarrow S$  are globally generated of degree $d$. A morphism between objects $(S,V,E)$ and $(S',V',E')$ is a Cartesian diagram
     \[
    \begin{tikzcd}
\pp V' \arrow[d] \arrow[r, "F"] & \pp V \arrow[d] \\
S' \arrow[r]                & S          
\end{tikzcd}
    \]
    together with an isomorphism $\phi:F^*E\rightarrow E'$.
\end{enumerate}
\end{definition}
\noindent

\par Bolognesi--Vistoli \cite{BV} gave a presentation for $\V_{r,d}$ as a quotient stack, which we briefly summarize here. Let $M_{r,d}$ be the affine space that represents the functor which sends a scheme $S$ to the set of matrices of size $(r+d)\times d$ with entries in $H^0(\p^1_S,\mathcal{O}_{\pp^1_S}(1))$. We can identify such a matrix with the associated map
\[
\mathcal{O}_{\p^1_S}(-1)^{d}\rightarrow \mathcal{O}_{\p^1_S}^{r+d}.
\]
Let $\Omega_{r,d} \subset M_{r,d}$ denote the open subscheme parametrizing injective maps with locally free cokernel. The group $\GL_d$ acts on $M_{r,d}$ by multiplication on the left, $\GL_{r+d}$ acts by multiplication on the right, and $\GL_2$ acts by change of coordinates on $H^0(\pp^1_S, \O_{\pp^1_S}(1))$. These actions commute with each other and leave $\Omega_{r,d}$ invariant, and hence $\GL_{d}\times \GL_{r+d} \times \GL_2$ acts on $\Omega_{r,d}$. There is a copy of $\mathbb{G}_m$ inside of $\GL_{d}\times \GL_{r+d}\times \GL_2$ embedded by $t\mapsto(t \Id_d, \Id_{r+d},t^{-1}\Id_2)$. The image $T$ acts trivially on $M_{r,d}$ and so we can define an action of the quotient 
\[
\Gamma_{r,d}:=\GL_{d}\times \GL_{r+d}\times \GL_2/T
\]
on $\Omega_{r,d}$. There is an exact sequence
\[
1\rightarrow \GL_{d}\times \GL_{r+d}\rightarrow \Gamma_{r,d}\rightarrow \PGL_2\rightarrow 1, 
\]
where the map $\Gamma_{r,d}\rightarrow \PGL_2$ is induced by the projection of $\GL_{d}\times \GL_{r+d}\times \GL_2\rightarrow \GL_2$.
\begin{thm}[Bolognesi--Vistoli \cite{BV}, Theorem 4.4]\label{BVthm}
There is an isomorphism of fibered categories
\[
\V_{r,d}\cong[\Omega_{r,d}/\Gamma_{r,d}.]
\]
\end{thm}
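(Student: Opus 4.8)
The plan is to produce a morphism of stacks $\Omega_{r,d} \to \V_{r,d}$, check it is $\Gamma_{r,d}$-invariant, and then verify that the induced morphism $[\Omega_{r,d}/\Gamma_{r,d}] \to \V_{r,d}$ is an equivalence by showing that $\Omega_{r,d} \to \V_{r,d}$ is a $\Gamma_{r,d}$-torsor. The morphism sends a matrix $A$, viewed as an injective map $A\colon \O_{\p^1_S}(-1)^d \to \O_{\p^1_S}^{r+d}$ with locally free cokernel, to the pair $(\p^1_S \to S,\ \coker A)$. The sheaf $E := \coker A$ is locally free of rank $r$, and restricting the exact sequence $0 \to \O_{\p^1}(-1)^d \to \O_{\p^1}^{r+d} \to E \to 0$ to a fiber exhibits $E$ fiberwise as a quotient of a trivial bundle, hence globally generated, of degree $d$, so $(\p^1_S, E) \in \V_{r,d}(S)$. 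Invariance is then formal: $\GL_d$ acts by precomposition and so does not change the cokernel; $\GL_{r+d}$ acts by postcomposition, changing $E$ by an isomorphism; $\GL_2$ acts by coordinate change on $H^0(\O_{\p^1_S}(1))$, i.e. by pullback along an $S$-automorphism of $\p^1_S$; and the subtorus $T$ acts trivially because the scaling on $\O_{\p^1_S}(-1)^d$ cancels the induced scaling of the coordinates. Hence the morphism factors through $[\Omega_{r,d}/\Gamma_{r,d}]$.

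For surjectivity, begin with an arbitrary $(P \to S, E) \in \V_{r,d}(S)$. A $\p^1$-fibration is étale-locally on the base isomorphic to $\p^1_S$ (it is a Brauer--Severi scheme of relative dimension one), so we may assume $P = \p^1_S$; write $\pi$ for the structure map. On each fiber $E$ is globally generated of degree $d$, so its splitting type has only nonnegative summands; hence $R^1\pi_* E = 0$, $\pi_* E$ is locally free of rank $r+d$, and $\pi^*\pi_* E \to E$ is surjective with kernel $K$ locally free of rank $d$. Restricting $0 \to K \to \pi^*\pi_* E \to E \to 0$ to a fiber identifies $K_s$ with $\ker(H^0(E_s)\otimes\O \to E_s)$, a rank $d$ bundle on $\p^1$ with vanishing $H^0$ and $H^1$, hence $K_s \cong \O_{\p^1}(-1)^d$. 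Thus $K(1)$ is fiberwise trivial, so by cohomology and base change $K(1) \cong \pi^* L$ for a rank $d$ bundle $L$ on $S$. Trivializing $L$ and $\pi_* E$ étale-locally on $S$ turns the inclusion $K \hookrightarrow \pi^*\pi_* E$ into a point of $\Omega_{r,d}(S)$ mapping to $(P, E)$.

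For the torsor property it remains to show that $\Gamma_{r,d} \times \Omega_{r,d} \to \Omega_{r,d}\times_{\V_{r,d}}\Omega_{r,d}$, $(\gamma, A) \mapsto (\gamma A, A)$, is an isomorphism. An $S$-point of the target is a pair of matrices $A_1, A_2$, an $S$-automorphism $F$ of $\p^1_S$, and an isomorphism $\phi\colon F^*\coker A_1 \xrightarrow{\sim} \coker A_2$. Applying $\pi_*$ to $\phi$ and using the chosen trivializations of the pushforwards yields an element of $\GL_{r+d}(S)$; since $\phi$ is compatible with the evaluation maps it induces an isomorphism of their kernels, which (fppf-locally, after trivializing the corresponding line bundles $L_i$) gives an element of $\GL_d(S)$; and $F$ lifts fppf-locally to an element of $\GL_2(S)$. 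These assemble to an element $\gamma \in \Gamma_{r,d}(S)$, well defined because the ambiguity in lifting $F$ and in the identification $K(1)\cong\pi^*L$ is exactly the subtorus $T$, and the fppf-local choices glue by uniqueness of $\gamma$; conversely $\gamma$ is recovered from its effect on the cokernel, the kernel, and the target $\p^1_S$, which gives injectivity. Combined with the previous paragraph, this shows $\V_{r,d} \cong [\Omega_{r,d}/\Gamma_{r,d}]$.

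The main obstacle is the bookkeeping forced by allowing genuine $\p^1$-fibrations rather than trivial families $\p^1_S$: every construction above must be performed after an étale (or fppf) localization of $S$ and then descended, and — most importantly — one must correctly account for the fact that an $S$-automorphism of $\p^1_S$ need not lift to $\GL_2(S)$. It is exactly this, together with the scaling ambiguity in $K(1)\cong\pi^*L$, that forces the symmetry group to be the quotient $\Gamma_{r,d} = \GL_d\times\GL_{r+d}\times\GL_2/T$ rather than $\GL_d\times\GL_{r+d}\times\PGL_2$, and pinning down that this quotient is neither too large nor too small is the crux. The remaining ingredients — local freeness of $\pi_* E$ and $\pi_*(K(1))$ and compatibility of their formation with base change — are routine applications of cohomology and base change, whose $R^1\pi_*$-vanishing hypotheses hold because fiberwise global generation pins the splitting type to have nonnegative summands.
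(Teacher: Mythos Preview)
The paper does not give its own proof of this theorem; it is quoted verbatim from Bolognesi--Vistoli \cite[Theorem 4.4]{BV}, and the only proof-adjacent remark in the paper is that the same argument, with $\pp^1$-fibrations replaced by $\pp V$ for $V$ a rank-$2$ bundle with trivial determinant, yields Proposition~\ref{Vprime}. Your sketch is essentially the argument of \cite{BV}: build the tautological map $\Omega_{r,d}\to\V_{r,d}$ via $A\mapsto(\p^1_S,\coker A)$, check $\Gamma_{r,d}$-equivariance, and verify the torsor condition by \'etale-locally trivializing the $\pp^1$-fibration, then the pushforwards $\pi_*E$ and $\pi_*(K(1))$, with the quotient by $T$ absorbing exactly the ambiguity in lifting an $S$-automorphism of $\p^1_S$ to $\GL_2(S)$ and in the identification $K(1)\cong\pi^*L$. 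This is correct and matches the cited source, so there is nothing further to compare.
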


A slight modification of the argument in Bolognesi--Vistoli  gives a quotient structure for $\Ub_{r,d}$, which we have utilized in our previous work \cite{part1, part2}.

\begin{prop} \label{Vprime}
There is an isomorphism of fibered categories
\[
\Ub_{r,d}\cong[\Omega_{r,d}/\GL_{d}\times \GL_{r+d}\times \SL_2].
\]
\end{prop}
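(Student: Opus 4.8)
The plan is to follow the proof of Theorem~\ref{BVthm} of Bolognesi--Vistoli almost verbatim, the one real change being that an object of $\Ub_{r,d}$ remembers the rank-$2$ bundle $V$ itself (not just the $\pp^1$-bundle $\pp V$), hence carries a genuine relative line bundle $\O_{\pp V}(1)$ with $\pi_*\O_{\pp V}(1)=V$ of \emph{trivial} determinant. This is exactly what lets one replace the nonsplit extension $\Gamma_{r,d}$ by the honest product $\GL_d\times\GL_{r+d}\times\SL_2$: the bundle $U:=\pi_*(K\otimes\O(1))$ below (and the bundle $\pi_*\O(1)$) requires a relative $\O(1)$, and in the $\pp^1$-fibration setting is only defined up to a twist from the base, which is what forces passage to $\Gamma_{r,d}$ and the $\gg_m$-quotient by $T$; here $V$ is given, so $\O_{\pp V}(1)$ and hence $U$ are canonical, and the $\GL_2$-frame bundle of $V$ is cut down to an $\SL_2$-frame bundle by the determinant condition. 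Concretely I will build mutually quasi-inverse morphisms between $\Ub_{r,d}$ and $[\Omega_{r,d}/\GL_d\times\GL_{r+d}\times\SL_2]$.

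For the first morphism: given $(S,V,E)$, write $\pi\colon\pp V\to S$. On each fiber $E\cong\bigoplus_i\O(a_i)$ with $a_i\ge 0$ and $\sum_i a_i=d$, so $h^0=r+d$ and $h^1=0$; cohomology and base change then show $W:=\pi_*E$ is locally free of rank $r+d$ and the evaluation $\pi^*W\to E$ is surjective. Let $K$ be its kernel. Using the classical fact that the kernel of $\O^{a+1}\to\O(a)$ on $\pp^1$ is $\O(-1)^{\oplus a}$, the restriction of $K$ to each fiber is $\O(-1)^{\oplus d}$, so $K\otimes\O_{\pp V}(1)$ is fiberwise trivial; base change again gives that $U:=\pi_*(K\otimes\O_{\pp V}(1))$ is locally free of rank $d$ and the natural map $\pi^*U\otimes\O_{\pp V}(-1)\to K$ is an isomorphism. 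Pushing forward the inclusion $\pi^*U\otimes\O_{\pp V}(-1)\hookrightarrow\pi^*W$ produces an injection $U\hookrightarrow W\otimes V$. Passing to the $\GL_d$-frame bundle of $U$, the $\GL_{r+d}$-frame bundle of $W$, and the bundle of determinant-preserving frames of $V$ (an $\SL_2$-torsor), over their fiber product $\pp V$ becomes $\pp(\O^{\oplus 2})$ and the injection becomes a matrix in $M_{r,d}$ whose cokernel is the pullback of $E$, hence a point of $\Omega_{r,d}$. This yields an object of $[\Omega_{r,d}/\GL_d\times\GL_{r+d}\times\SL_2]$, functorially in $S$.

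For the second morphism: over $\Omega_{r,d}$ with its trivial $\pp^1$-bundle $\pp(\O^{\oplus 2})$, the tautological inclusion $\O(-1)^{\oplus d}\hookrightarrow\O^{\oplus r+d}$ has locally free cokernel $E^{\mathrm{univ}}$ which is fiberwise globally generated of degree $d$, and the triple $(\Omega_{r,d},\O^{\oplus 2},E^{\mathrm{univ}})$ is tautologically equivariant for the $\GL_d$-action on the source, the $\GL_{r+d}$-action on the target, and the $\SL_2$-action on $\O^{\oplus 2}$ (hence on $H^0(\pp^1,\O(1))$ and on the whole construction); it therefore descends to a morphism $[\Omega_{r,d}/\GL_d\times\GL_{r+d}\times\SL_2]\to\Ub_{r,d}$. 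I would then verify these are quasi-inverse: one composite recovers $E=\coker(\pi^*U\otimes\O_{\pp V}(-1)\to\pi^*W)$, and the other recovers the triple $(U,W,V)=(\pi_*(K\otimes\O(1)),\,\pi_*E,\,\pi_*\O_{\pp V}(1))$ together with its canonical frames.

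The main obstacle will be the same bookkeeping that is the heart of \cite{BV}: checking that all the cohomology-and-base-change identifications above are compatible with the three group actions simultaneously, and confirming that exactly $\SL_2$ — and no further quotient — is needed, which is precisely the point at which the trivialization of $\det V$ is used and the only place this argument genuinely departs from the $\V_{r,d}$ one. One also has to note that every construction is fppf-local on $S$, so that what is produced is an equivalence of stacks rather than of the prestacks of naive triples; but this follows from the corresponding observations in Bolognesi--Vistoli.
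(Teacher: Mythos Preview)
Your proposal is correct and is exactly the approach the paper takes: the paper's proof simply says to repeat the argument of \cite[Theorem~4.4]{BV} verbatim, replacing the $\pp^1$-fibration $P\to S$ by $\pp V\to S$ for a rank~$2$ bundle $V$ with trivialized determinant, which is precisely what you have spelled out in detail.
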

\begin{proof}
The proof is the same as in \cite[Theorem 4.4]{BV}, except that instead of taking $P\rightarrow S$ a $\pp^1$-fibration in the definition of the various stacks, we take $P=\p V\rightarrow S$ where $V$ is a rank $2$ vector bundle with trivial determinant.
\end{proof}

To parametrize the linear algebraic data associated to a low degree cover of $\pp^1$, we will need to construct stacks parametrizing pairs of vector bundles on $\pp^1$. 
These stacks are products of the form $\Ub_{r,d} \times_{\BSL_2} \Ub_{s,e}$, which parametrize a pair of vector bundles on the same $\pp^1$-bundle, or $\V_{r,d}\times_{\BPGL_2} \V_{s,e}$, which parametrize a pair of vector bundles on the same $\pp^1$-fibration. 

Let $G_{r,d,s,e} := \GL_d \times \GL_{r+d} \times \GL_e \times \GL_{s+e}$.
The group $G_{r,d,s,e} \times \SL_2$ acts on $M_{r,d}$ via the projection $G_{r,d,s,e} \times \SL_2 \to \GL_d \times \GL_{r+d} \times \SL_2$; and similarly on $M_{s,e}$ via the projection $G_{r,d,s,e} \times \SL_2 \to \GL_e \times \GL_{s+e} \times \SL_2$.
By Proposition \ref{Vprime}, we have
\begin{equation} \label{asq}
\Ub_{r,d} \times_{\BSL_2} \Ub_{s,e} = [\Omega_{r,d} \times \Omega_{s,e}/G_{r,d,s,e} \times \SL_2].
\end{equation}

Let $T_d$ and $T_{r+d}$ denote the universal vector bundles on $\BGL_d$ and $\BGL_{r+d}$; similarly, let $S_e$ and $S_{s+e}$ be the universal vector bundles on $\BGL_{e}$ and $\BGL_{s+e}$. 
The integral Chow ring of $\mathrm{B}(G_{r,d,s,e} \times \SL_2)$ is the free $\zz$-algebra on the Chern classes of $T_d, T_{r+d}, S_e, S_{s+e}$, together with the universal second Chern class $c_2$ on $\BSL_2$.
Let us denote these classes by
\begin{align*}
t_i &= c_i(T_d) \qquad \text{and} \qquad u_i = c_i(T_{r+d}) \\
v_i &= c_i(S_e) \qquad \text{and} \qquad  w_i = c_i(S_{s+e}).
\end{align*}
Since $\Omega_{r,d} \times \Omega_{s,e}$ is open inside the affine space $M_{r,d} \times M_{s,e}$, 
the excision and homotopy properties imply 
\begin{equation} \label{gens}
\text{$\Pic(\Ub_{r,d} \times_{\BSL_2} \Ub_{s,e})$ is generated by the restrictions of $t_1, u_1, v_1, w_1$.}
\end{equation}

We now identify the restrictions of the tautological bundles $T_d$ and $T_{d+r}$ in terms of the universal rank $r$, degree $d$ vector bundle on $\pp^1$.
Let $\pi: \mathcal{P} \rightarrow \Ub_{r,d}$ be the universal $\pp^1$-bundle. We write $z := c_1(\O_{\mathcal{P}}(1)) \in A^1(\mathcal{P})$.
We have $c_2 = c_2(\pi_* \O_{\mathcal{P}}(1)) \in A^2(\Ub_{r,d})$, the universal second Chern class, pulled back via the natural map $\Ub_{r,d} \rightarrow \BSL_2$).
Note that $c_1(\pi_* \O_{\mathcal{P}}(1)) = 0$, so by the projective bundle formula
\[A^*(\mathcal{P}) = A^*(\Ub_{r,d})[z]/(z^2 + \pi^*c_2).\]
Let $\E$ be the universal rank $r$, degree $d$ vector bundle on $\P$.
 The Chern classes of $\E$ may thus be written as
\[c_i(\E) = \pi^* a_i + (\pi^* a_i') z \qquad \text{where} \qquad a_i \in A^i(\Ub_{r,d}), \quad a_i' \in A^{i-1}(\Ub_{r,d}).\]
Note that $a_1' = d$. Let $\gamma: \Ub_{r,d} \times_{\BSL_2} \Ub_{s,e} \to B(G_{r,d,s,e} \times \SL_2)$ be the structure map. Then by \cite[Lemma 3.2]{Brd} (noting that $\det(\pi_* \O_{\P}(1))$ is trivial), we have 
\begin{equation} \label{idT}
\gamma^* T_d = \pi_* \E(-1) \qquad \text{and} \qquad \gamma^*T_{r+d} = \pi_* \E.
\end{equation}
Since $R^1\pi_*\E(-1)$ and $R^1\pi_*\E$ are zero, Grothendieck--Riemann--Roch says that
the Chern characters of $\pi_*\E(-1)$ and $\pi_*\E$ are push forwards by $\pi$ of polynomials in the $c_i(\E)$ and $z$. The push forward of such a polynomial is a polynomial in the $a_i, a_i'$ and $c_2$. In particular, the restrictions of $t_1$ and $u_1$ to $\Pic(\Ub_{r,d})$ are linear combinations of $a_1$ and  $a_2'$. We calculate this explicitly in the following example. 

\begin{example}[First Chern classes] \label{exc1}
Let $T_{\pi} = \O_{\P}(2)$ denote the relative tangent bundle of $\pi: \P \to \Ub_{r,d}$, so the the relative Todd class is $\mathrm{Td} _{\pi} = 1 + \frac{1}{2} c_1(T_{\pi}) + \ldots = 1 + z + \ldots$. Using Equation \eqref{idT}, and then Grothedieck--Riemann--Roch, we have that on $\Ub_{r,d}$,
\begin{align*} 
t_1 &= c_1(\pi_*\E(-1)) = \ch_1(\pi_*\E(-1)) = [\pi_*(\ch(\E) \cdot \ch(\O_{\P}(-1))\cdot \mathrm{Td}_{\pi})]_1 \\
&= [\pi_*(\ch(\E) \cdot (1 - z) \cdot (1 + z))]_1 = \pi_*(\ch_2(\E)) = \pi_*\left(\frac{1}{2}c_1(\E)^2 - c_2(\E)\right) \\
&= da_1 - a_2' \\
u_1 &= c_1(\pi_*\E) = \ch_1(\pi_*\E) = [\pi_*(\ch(\E) \cdot \mathrm{Td}_{\pi})]_1 = [\pi_*(\ch(\E) \cdot (1 + z))]_1 \\
&= \pi_*(\ch_2(\E) +  \ch_1(\E)z) = (da_1 - a_2') + a_1 \\
&= (d+1)a_1 - a_2'.
\end{align*}
It follows that $a_1 = u_1 - t_1$ and $a_2' =du_1 - (d+1)t_1$.
\end{example}

In Equation \eqref{asq}, we described $\Ub_{r,d} \times_{\BSL_2} \Ub_{s,e}$ as a quotient.
To similarly understand the moduli space of pairs of vector bundles on a $\p^1$-\emph{fibration}, we need the ``pair" version of $\Gamma_{r,d}$ and of Theorem \ref{BVthm}. Precisely, let us define
 $\Gamma_{r,d,s,e}$ to be the quotient of $G_{r,d,s,e} \times \GL_2$ by $t \mapsto (t\Id_d, \Id_{r+d}, t\Id_e, I_{s+e}, t^{-1}\Id_2)$.
 Then, we have
 \[\V_{r,d} \times_{\BPGL_2} \V_{s,e} = [\Omega_{r,d} \times \Omega_{s,e}/\Gamma_{r,d,s,e}].\]

Considering the commutative diagram
\begin{center}
\begin{tikzcd}
1 \arrow{r} & \mu_2 \arrow{d} \arrow{r} & G_{r,d,s,e} \times \SL_2 \arrow{d} \arrow{r} & \bullet  \arrow{d}{\sim} \arrow{r} &1 \\
1 \arrow{r} & \mathbb{G}_m \arrow{d}[swap]{(-)^2} \arrow{r} & G_{r,d,s,e} \times \GL_2 \arrow{d}{\det} \arrow{r} &\Gamma_{r,d,s,e} \arrow{r} &1 \\
&\mathbb{G}_m \arrow{r}[swap]{\mathrm{id}} &\mathbb{G}_m
\end{tikzcd}
\end{center}
we see by the snake lemma that $\Gamma_{r,d,s,e}$ is a $\mu_2$ quotient of $G_{r,d,s,e} \times \SL_2$.

Let us assume $r,s > 1$, so that 
the complement of $\Omega_{r,d} \times \Omega_{s,e} \subset M_{r,d} \times M_{s,e}$ has codimension at least $2$. In particular, by the excision and homotopy properties, we have natural identifications
\[\Pic(\Ub_{r,d} \times_{\BSL_2} \Ub_{s,e}) = \Pic (\mathrm{B}(G_{r,d,s,e} \times \SL_2)),\]
and
\[\Pic(\V_{r,d} \times_{\BPGL_2} \V_{s,e}) = \Pic(\mathrm{B}\Gamma_{r,d,s,e}).\]
The group $\Pic (\mathrm{B}(G_{r,d,s,e} \times \SL_2))$
is the free $\zz$-module generated by $t_1, u_1, v_1, w_1$ (see \eqref{gens}). Using Example \ref{exc1}, we see that the classes $a_1, a_2', b_1, b_2'$ also freely generate
$\Pic(\Ub_{r,d} \times_{\BSL_2} \Ub_{s,e})$.

\begin{lem} \label{int-pic}
The natural map $\Ub_{r,d} \times_{\BSL_2} \Ub_{s,e} \to \V_{r,d} \times_{\BPGL_2} \V_{s,e}$ induces an inclusion 
\[\Pic(\V_{r,d} \times_{\BPGL_2} \V_{s,e}) \hookrightarrow \Pic(\Ub_{r,d} \times_{\BSL_2} \Ub_{s,e}),\]
whose image is the subgroup generated by 
\begin{equation} \label{gens1}
\begin{cases} t_1, u_1, v_1, w_1 & \text{if $d, e$ both even} \\
2t_1, u_1, v_1, w_1 & \text{if $d$ odd and $e$ even} \\
t_1, u_1, 2v_1, w_1 & \text{if $d$ even and $e$ odd} \\
t_1 - v_1, 2t_1, u_1, w_1 & \text{if $d, e$ both odd,} \end{cases}
\end{equation}
or equivalently by
\begin{equation} \label{gens2}
\begin{cases}
a_1, a_2', b_1, b_2'  & \text{if $d, e$ both even}\\
2a_1, a_2', b_1, b_2'& \text{if $d$ odd and $e$ even}  \\
a_1, a_2', 2b_1, b_2' & \text{if $d$ even and $e$ odd}  \\
a_1 - b_1, 2a_1, a_2', b_2' & \text{if $d, e$ both odd.}
\end{cases}
\end{equation}
\end{lem}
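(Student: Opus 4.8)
The plan is to realize $\Pic(\V_{r,d} \times_{\BPGL_2} \V_{s,e}) = \Pic(\mathrm{B}\Gamma_{r,d,s,e})$ as the subgroup of characters of $G_{r,d,s,e} \times \SL_2$ that descend to the $\mu_2$-quotient $\Gamma_{r,d,s,e}$, using the commutative diagram with the snake lemma already set up in the excerpt. Concretely, since $\Gamma_{r,d,s,e}$ is the quotient of $G_{r,d,s,e} \times \SL_2$ by the central $\mu_2$ embedded as $\epsilon \mapsto (\epsilon \Id_d, \Id_{r+d}, \epsilon \Id_e, \Id_{s+e}, \epsilon \Id_2)$ (this is the image of $\mu_2 \hookrightarrow \gg_m \hookrightarrow G_{r,d,s,e} \times \GL_2$ under $\det$, traced through the diagram), a line bundle on $\mathrm{B}(G_{r,d,s,e}\times\SL_2)$ descends to $\mathrm{B}\Gamma_{r,d,s,e}$ if and only if the corresponding character of $G_{r,d,s,e} \times \SL_2$ restricts trivially to this $\mu_2$. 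The restriction-to-the-subgroup map $\Pic(\mathrm{B}\Gamma_{r,d,s,e}) \to \Pic(\mathrm{B}(G_{r,d,s,e}\times\SL_2))$ is injective because $\mu_2$ is finite and $\mathrm{B}(G\times\SL_2) \to \mathrm{B}\Gamma$ is a $\mu_2$-gerbe, so the image is exactly the kernel of restriction to $\mu_2$.

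First I would compute the restriction to the central $\mu_2$ of each of the four generators $t_1, u_1, v_1, w_1$ of $\Pic(\mathrm{B}(G_{r,d,s,e}\times\SL_2))$. Thinking of these as characters: $t_1 = c_1(T_d)$ is $\det$ of the standard representation of $\GL_d$, so under $\epsilon \mapsto \epsilon\Id_d$ it pulls back to $\epsilon^d$; similarly $u_1 \mapsto \epsilon^0 = 1$ (since the $\GL_{r+d}$ factor is fixed), $v_1 \mapsto \epsilon^e$, and $w_1 \mapsto 1$; the $\SL_2$ factor contributes nothing to $\Pic$. So a general class $x t_1 + y u_1 + z v_1 + w w_1$ restricts to $\epsilon^{xd + ze}$, and this vanishes on $\mu_2$ precisely when $xd + ze \equiv 0 \pmod 2$. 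Reading off integer solutions in the four parity cases for $(d,e)$ gives exactly the generating sets in \eqref{gens1}: if $d,e$ are both even every combination works, giving $t_1,u_1,v_1,w_1$; if $d$ is odd and $e$ even we need $x$ even, giving $2t_1,u_1,v_1,w_1$; symmetrically for $d$ even, $e$ odd; and if both are odd we need $x + z$ even, so the lattice is generated by $t_1 - v_1$ (or $t_1+v_1$) together with $2t_1, u_1, w_1$.

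Finally, to get the equivalent description \eqref{gens2} in terms of $a_1, a_2', b_1, b_2'$, I would invoke the change of basis from Example \ref{exc1}: on $\V_{r,d}$ one has $a_1 = u_1 - t_1$ and $a_2' = d\,u_1 - (d+1)t_1$, and symmetrically $b_1 = w_1 - v_1$, $b_2' = e\,w_1 - (e+1)v_1$ on $\V_{s,e}$. This is a unimodular change of basis (the matrix expressing $(a_1,a_2')$ in terms of $(t_1,u_1)$ has determinant $\pm 1$, since $\det\begin{pmatrix} -1 & 1 \\ -(d+1) & d\end{pmatrix} = 1$), so $\{a_1,a_2',b_1,b_2'\}$ is again a $\zz$-basis of $\Pic(\Ub_{r,d}\times_{\BSL_2}\Ub_{s,e})$. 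I then check that the parity condition $xd + ze \equiv 0 \pmod 2$ translates correctly: writing a class in the $a,b$-basis and re-expressing in the $t,u$-basis, the coefficient of $t_1$ becomes a combination whose parity matches, and likewise for $v_1$; tracking this through shows the sublattice is generated by the lists in \eqref{gens2}.

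The main obstacle I anticipate is bookkeeping the $\mu_2$-embedding correctly — in particular confirming that the copy of $\mu_2$ produced by the snake lemma really is $\epsilon \mapsto (\epsilon\Id_d, \Id_{r+d}, \epsilon\Id_e, \Id_{s+e}, \epsilon\Id_2)$ and not, say, one that also scales the $\GL_{r+d}$ or $\GL_{s+e}$ factors. This requires carefully chasing the identification of the central $\gg_m \subset G_{r,d,s,e}\times\GL_2$ used to define $\Gamma_{r,d,s,e}$ (namely $t \mapsto (t\Id_d,\Id_{r+d},t\Id_e,\Id_{s+e},t^{-1}\Id_2)$) against the $\gg_m$ in the middle row of the diagram and the squaring map $\gg_m \xrightarrow{(-)^2}\gg_m$; once that is pinned down, the rest is the linear-algebra over $\zz/2$ sketched above, plus the unimodularity check for \eqref{gens2}.
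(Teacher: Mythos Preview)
Your proposal is correct and follows essentially the same approach as the paper: both identify $\Pic(\mathrm{B}\Gamma_{r,d,s,e})$ with the kernel of the restriction map $\Pic(\mathrm{B}(G_{r,d,s,e}\times\SL_2)) \to \Pic(\mathrm{B}\mu_2)$ coming from the short exact sequence of groups, compute that $t_1 \mapsto dh$, $v_1 \mapsto eh$, $u_1,w_1 \mapsto 0$, and read off the sublattice case by case; the paper then makes the translation to \eqref{gens2} explicit by writing down the unimodular change-of-basis matrix in the hardest case ($d,e$ both odd), exactly as you outline. Your anticipated obstacle about pinning down the $\mu_2$-embedding is resolved in the paper simply by recalling that the defining $\gg_m \hookrightarrow G_{r,d,s,e}\times\GL_2$ is $t \mapsto (t\Id_d,\Id_{r+d},t\Id_e,\Id_{s+e},t^{-1}\Id_2)$, so its $\mu_2$ is as you wrote.
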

\begin{proof}
Recall that $\Pic(\mathrm{B}G)$ is naturally identified with the character group of $G$ because it is identified with Mumford's functorial Picard group. 
The exact sequence of groups
\[0 \rightarrow \mu_2 \rightarrow G_{r,d,s,e} \times \SL_2 \rightarrow \Gamma_{r,d,s,e} \rightarrow 0\]
induces a left exact sequence
\begin{equation} \label{leftes}0 \rightarrow \Pic(\mathrm{B}\Gamma_{r,d,s,e}) \rightarrow \Pic(\mathrm{B}( G_{r,d,s,e} \times \SL_2)) \rightarrow \Pic(\mathrm{B}\mu_2).
\end{equation}
The Picard group  $\Pic(\mathrm{B}\mu_2)$ is isomorphic to $\zz/2\zz$. Let $h$ be a generator of $\Pic(\mathrm{B} \mu_2)$.
Recall that the map $\mu_2 \to G_{r,d,s,e} \times \SL_2$ sends $-1$ to $(-\Id_d, \Id_{r+d}, -\Id_e, \Id_{s+e}, -\Id_2)$. 
The generator $t_1 \in \Pic(\mathrm{B}(G_{r,d,s,e} \times \SL_2))$ corresponds to the determinant of the rank $d$ matrix. Thus, the right-hand map in \eqref{leftes} sends $t_1$ to $dh$. Similarly,
$u_1$ and $w_1$ are sent to zero, and $v_1$ to $eh$. The kernel is thus the subgroup generated by the classes listed in \eqref{gens1}.

The translation between \eqref{gens1} and \eqref{gens2} follows from Example \ref{exc1}. We explain the case $d, e$ both odd, the other cases being similar but simpler. Since $d$ and $e$ are both odd, the following change of basis matrix has integer coefficients
\[\left(\begin{matrix} t_1 - v_1 \\ 2t_1 \\ u_1 \\ w_1 \end{matrix}\right) = \left(\begin{matrix} e & \frac{d - e}{2} & -1 & 1 \\[3pt]
0 & d & -2 & 0 \\[3pt]
0 & \frac{d+1}{2} & -1 & 0 \\[3pt]
-(e+1) & \frac{e+1}{2} & 0 & -1
\end{matrix} \right)\left(\begin{matrix} a_1 - b_1 \\ 2a_1 \\ a_2' \\ b_2' \end{matrix}\right). \]
The determinant of the $4 \times 4$ matrix above is $1$ so the entries of the two column vectors generate the same subgroup with $\zz$-coefficients.
\end{proof}

\begin{lem} \label{comparison}
Let $X' \subset X$ be an open substack. Given a smooth map $f: Y \to X$, let $Y' \subset Y$ be the preimage of $X'$. 
If $\Pic(X) \to \Pic(Y)$ is injective, then $\Pic(X') \to \Pic(Y')$ is injective.
\end{lem}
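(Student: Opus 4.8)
The plan is a diagram chase built on the excision sequences that compute the Picard group of an open substack, using that $f$ is flat. Write $Z := X\setminus X'$ with its reduced structure, let $Z_1,\dots,Z_r$ be the codimension-one irreducible components of $Z$, and set $f':=f|_{Y'}\colon Y'\to X'$. Since $X$ is smooth, $\Pic=\operatorname{CH}^1$ and excision gives a right-exact sequence
\[\bigoplus_{i=1}^{r}\zz[Z_i]\longrightarrow \Pic(X)\longrightarrow \Pic(X')\longrightarrow 0,\]
and likewise, with $W_1,\dots,W_s$ the codimension-one components of $f^{-1}(Z)=Y\setminus Y'$,
\[\bigoplus_{j=1}^{s}\zz[W_j]\longrightarrow \Pic(Y)\longrightarrow \Pic(Y')\longrightarrow 0.\]
Because $f$ is flat, $f^{-1}(Z_i)$ is pure of codimension one, each $W_j$ lies over a unique $Z_i$, and flat pullback of cycles turns these into a commutative ladder
\begin{center}
\begin{tikzcd}
\bigoplus_i \zz[Z_i] \arrow{r} \arrow{d} & \Pic(X) \arrow{r} \arrow{d}{f^{*}} & \Pic(X') \arrow{r} \arrow{d}{f'^{*}} & 0 \arrow{d} \\
\bigoplus_j \zz[W_j] \arrow{r} & \Pic(Y) \arrow{r} & \Pic(Y') \arrow{r} & 0.
\end{tikzcd}
\end{center}
Moreover, since $f$ is smooth and $Z_i$ is reduced, $f^{-1}(Z_i)$ is reduced, so the left vertical arrow sends $[Z_i]\mapsto \sum_{W_j\to Z_i}[W_j]$ with all multiplicities equal to $1$.

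With this ladder in place, I would invoke the four lemma: if the middle vertical arrow $f^{*}$ is injective — which is the hypothesis — and the left vertical arrow is surjective, then $f'^{*}$ is injective, which is the claim. The left vertical arrow is surjective exactly when each $f^{-1}(Z_i)$ is irreducible, and this is automatic when the fibers of $f$ are geometrically connected (a smooth connected scheme over a field is irreducible, and a flat map with irreducible base and geometrically irreducible fibers has irreducible source); that case covers all of the applications in this paper.

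The main obstacle, in the full generality stated, is the surjectivity of the left vertical arrow when some $f^{-1}(Z_i)$ is disconnected. To handle this one replaces the support groups by the honest kernels $K_X:=\ker(\Pic X\to\Pic X')$ and $K_Y:=\ker(\Pic Y\to\Pic Y')$, applies the snake lemma, and uses injectivity of $f^{*}$ to reduce the statement to the identity $K_Y\cap f^{*}\Pic(X)=f^{*}(K_X)$ — that is, to showing a pullback line bundle $f^{*}\mathcal L$ which is trivial on $Y'$ is already the pullback of a line bundle trivial on $X'$. This is where smoothness of $f$ (not just flatness) is needed: writing $f^{*}\mathcal L\cong\mathcal O_Y(\textstyle\sum_j n_j W_j)$ and restricting along the generic point $\eta_i$ of $Z_i$ — where $\Pic$ vanishes, so $f^{*}\mathcal L$ becomes trivial, while the special fiber $f^{-1}(\eta_i)$ is smooth and its components $W_j$ over $Z_i$ are disjoint and each occurs with multiplicity one in $f^{*}Z_i$ — should pin the coefficients $n_j$ lying over a fixed $Z_i$ to a common value $c_i$. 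Then $\sum_j n_j W_j=f^{*}(\sum_i c_i Z_i)$, and injectivity of $f^{*}$ gives $\mathcal L=\sum_i c_i Z_i\in K_X$, finishing the argument.
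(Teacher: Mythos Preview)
Your core argument---set up the excision ladder and apply the four/snake lemma---is exactly the paper's approach. The paper streamlines slightly by first reducing to the case where $X\setminus X'$ is a single irreducible divisor $D$, then taking the left column to be $\langle[D]\rangle\to\langle f^*[D]\rangle$, which is tautologically surjective; the snake lemma then finishes. For the bottom row to be exact with $\langle f^*[D]\rangle$ in place of the full kernel $K_Y$, one needs $f^{-1}(D)$ irreducible. The paper asserts this as a consequence of smoothness, which tacitly uses geometrically connected fibers---precisely the point you flag. You are right that this holds in every application (the map in question is base-changed from the $\mu_2$-gerbe $\BSL_2\to\BPGL_2$), so your version of the argument is, if anything, more careful than the paper's.

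Your final paragraph, attempting the fully general statement by restricting to the generic point $\eta_i$ of $Z_i$, is unnecessary for the paper and does not work as written. Over $\Spec\O_{X,\eta_i}$ the bundle $f^*\mathcal L$ is trivial simply because every line bundle on a local scheme is; this puts no constraint on the individual coefficients $n_j$. Restricting instead to the fiber $Y_{\eta_i}$ does not help either: the components $W_j\cap Y_{\eta_i}$ have codimension zero in $Y_{\eta_i}$, so they are not divisors there and the comparison you want does not make sense. Indeed, already when $Y=X\sqcup X$ and $[D]$ is torsion in $\Pic(X)$, the $n_j$ over a fixed $Z_i$ need not coincide, so the mechanism you propose cannot be what forces the conclusion. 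The honest summary is that both your argument and the paper's require connected fibers; that hypothesis is present in every use of the lemma, so you should simply invoke it and drop the last paragraph.
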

\begin{proof}
It suffices to treat the 
case when $X' \subset X$ is the complement of an irreducible divisor $D$.  (Removing any component of codimension two or more from $X$ does not change $\Pic(X)$; if $X'$ is the complement of a reducible divisor, then we just apply the irreducible case to each component in turn.)

Because $f: Y \to X$ is smooth, $Y' \subset Y$ is the complement of the irreducible divisor $f^{-1}(D)$, which has class $f^*[D]$.
Let $\langle [D] \rangle$ denote the subgroup of $\Pic(X)$ generated by the fundamental class of $D$ and similarly for $\langle f^*[D] \rangle$ inside $\Pic(Y)$.
We therefore have a diagram of exact sequences where the left vertical map is surjective.
\begin{center}
\begin{tikzcd}
0 \arrow{r} &\langle[D] \rangle \arrow{r} \arrow{d} & \Pic(X) \arrow[hook]{d} \arrow{r} & \Pic(X') \arrow{r} \arrow{d} & 0 \\
0 \arrow{r} &\langle f^*[D] \rangle \arrow{r} & \Pic(Y) \arrow{r} & \Pic(Y') \arrow{r} & 0.
\end{tikzcd}
\end{center}
The result now follows from the snake lemma.
\end{proof}

We shall be applying Lemma \ref{comparison} in the context of a smooth map $Y \to X$ which is induced by a base change $\BSL_2 \to \BPGL_2$.
The basic idea is that injectivity of a certain map of Picard groups will allow us to argue that our previous calculations determining relations in $\Pic(\H_{k,g})$ actually hold in $\Pic(\Hp_{k,g})$  with the ``same formulas." We then just need to understand which classes are integral, which will be deduced from Lemma \ref{int-pic}.

\subsection{Construction of the base stacks}
To keep track of the integrality conditions arising from Lemma \ref{int-pic}, we shall find it useful to use the quantity
\begin{equation}\label{epsilondef}
\epsilon := \epsilon_{k,g} = \begin{cases} 1 & \text{if $g+k-1$ is even} \\ 2 & \text{if $g+k-1$ is odd,} \end{cases} 
\end{equation}
which keeps track of the parity of the degree of certain vector bundles on $\pp^1$ we associate to degree $k$, genus $g$ covers.
We shall often drop the subscript when $k$ and $g$ are understood. Below, we introduce the ``base stacks" $\mathscr{B}_{k,g}$ that parametrize the bundles on $\pp^1$ we shall associate to degree $k$, genus $g$ covers in Sections \ref{trigonal}, \ref{tetragonal}, and \ref{pentagonal}.

\subsubsection{Degree 3}
We set $\mathscr{B}_{3,g} := \V_{2,g+2}$. Then, by Lemma \ref{int-pic}, we have \begin{equation} \label{pb3} \Pic(\mathscr{B}_{3,g}) = \zz (\epsilon a_1) \oplus \zz a_2',
\end{equation}
where $\epsilon$ is as in \eqref{epsilondef}.

\subsubsection{Degree 4} \label{b4def}
First let us recall a standard, but very useful fact.
\begin{lem} \label{gm}
Suppose that $X\rightarrow Y$ is a $\gg_m$-torsor with associated line bundle $\mathcal{L}$. Then,
\[
\Pic(X)\cong \Pic(Y)/\langle c_1(\mathcal{L})\rangle.
\]
\end{lem}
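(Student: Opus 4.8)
The statement to prove is Lemma~\ref{gm}: if $X \to Y$ is a $\gg_m$-torsor with associated line bundle $\mathcal{L}$, then $\Pic(X) \cong \Pic(Y)/\langle c_1(\mathcal{L}) \rangle$.

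\textbf{Approach.} The plan is to realize $X$ as the complement of the zero section inside the total space of the line bundle $\mathcal{L}$ (equivalently, the total space of the line bundle is the associated bundle of the $\gg_m$-torsor under the standard action of $\gg_m$ on $\mathbb{A}^1$). Write $\mathbb{L} \to Y$ for this line bundle total space, with zero section $s \colon Y \hookrightarrow \mathbb{L}$ whose image is a closed substack isomorphic to $Y$, of codimension one. Then $X = \mathbb{L} \setminus s(Y)$. The proof reduces to combining two standard facts about Picard groups (valid for algebraic stacks, not just schemes): homotopy invariance for vector bundles, and the excision exact sequence for removing a codimension-one closed substack.

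\textbf{Key steps.} First I would invoke homotopy invariance: the projection $\pi \colon \mathbb{L} \to Y$ induces an isomorphism $\pi^* \colon \Pic(Y) \xrightarrow{\sim} \Pic(\mathbb{L})$, since $\mathbb{L} \to Y$ is an affine bundle (a torsor under the line bundle, hence Zariski-locally $\mathbb{A}^1 \times (\text{open})$). Second, the excision sequence for the open immersion $j \colon X \hookrightarrow \mathbb{L}$ with closed complement $s(Y) \cong Y$ of pure codimension one gives a right-exact sequence
\[
\Pic(Y) \xrightarrow{\ s_*\ } \Pic(\mathbb{L}) \xrightarrow{\ j^*\ } \Pic(X) \to 0,
\]
where $s_*$ sends a divisor class on $Y$ to the class of its preimage; concretely the generator of the image of $s_*$ is the class of the zero section $[s(Y)] \in \Pic(\mathbb{L})$. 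Third, I would identify this class: under the isomorphism $\pi^* \colon \Pic(Y) \xrightarrow{\sim} \Pic(\mathbb{L})$, the divisor class $[s(Y)]$ corresponds to $c_1(\mathcal{L})$. This is because the zero section of a line bundle total space is cut out, Zariski-locally where $\mathcal{L}$ is trivialized, by a single equation, and the transition functions patching these local equations are exactly the transition functions of $\mathcal{L}$; hence $\mathcal{O}_{\mathbb{L}}(s(Y)) \cong \pi^* \mathcal{L}$. Combining, $\Pic(X) \cong \Pic(\mathbb{L})/\langle [s(Y)] \rangle \cong \Pic(Y)/\langle c_1(\mathcal{L}) \rangle$.

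\textbf{Main obstacle.} The only real subtlety is making sure the excision sequence and homotopy invariance are available in the stacky generality needed here; but since all the stacks in this paper are quotient stacks $[U/G]$ with $U$ smooth, this follows from the corresponding statements for $G$-equivariant Picard groups of smooth schemes (or one can cite the equivalence with equivariant Chow groups $A^1$ and apply excision there). The identification $\mathcal{O}_{\mathbb{L}}(s(Y)) \cong \pi^*\mathcal{L}$ is a routine local computation that I would state without belaboring. So the ``hard part'' is essentially bookkeeping rather than any genuine difficulty.
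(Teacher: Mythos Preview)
Your proposal is correct and follows essentially the same argument as the paper's proof: realize $X$ as the complement of the zero section in the total space of $\mathcal{L}$, apply excision together with the homotopy identification $\Pic(\mathbb{L}) \cong \Pic(Y)$, and identify the class of the zero section with $\pi^* c_1(\mathcal{L})$. The paper is terser (it does not spell out the transition-function justification or the stacky caveat), but the route is identical.
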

\begin{proof}
Under the correspondence between $\gg_m$-torsors and line bundles, $X$ is the complement of the zero section in the total space of the line bundle $\mathcal{L}\rightarrow Y$. We thus have the exact sequence
\[
\Pic(Y)\rightarrow \Pic(\mathcal{L})\rightarrow \Pic(X)\rightarrow 0.
\]
After identifying $\Pic(\mathcal{L})$ with $\Pic(Y)$, the first map in the sequence is given by multiplication by the first Chern class $c_1(\mathcal{L})$, and the result follows.
\end{proof}

Let $\pi: \mathscr{P} \to \V_{3,g+3} \times_{\BPGL_2} \V_{2,g+3}$ be the universal $\pp^1$-fibration, equipped with universal bundles $\mathscr{E}$ of rank $3$ and $\mathscr{F}$ of rank $2$. Define $\mathscr{B}_{4,g}$ be the $\gg_m$-torsor over $\V_{3,g+3} \times_{\BPGL_2} \V_{2,g+3}$ associated to $\pi_*(\det \mathscr{E} \otimes \det \mathscr{F}^\vee)$. This push forward is a line bundle by cohomology and base change, and has class $c_1(\pi_*(\det \mathscr{E} \otimes \det \mathscr{F}^\vee)) = a_1 - b_1 \in \Pic(\V_{3,g+3} \times_{\BPGL_2} \V_{2,g+3})$.
The stack $\mathscr{B}_{4,g}$ parametrizes pairs of vector bundles $(E, F)$ on $\pp^1$-fibrations together with an isomorphism of their determinants.

Combining Lemma \ref{gm} and Lemma \ref{int-pic} (where we are necessarily either in the first or last case of \eqref{gens2}), we find
\begin{equation} \label{pb4} \Pic(\mathscr{B}_{4,g}) = \zz (\epsilon a_1) \oplus  \zz a_2' \oplus \zz b_2',
\end{equation}
where again, $\epsilon$ is as in \eqref{epsilondef}.

\subsubsection{Degree 5} \label{b5d}
Let $\pi:\mathscr{P}\rightarrow \V_{4,g+4}\times_{\BPGL_2}\V_{5,2g+8}$ be the universal $\p^1$-fibration, equipped with universal bundles $\mathscr{E}$ of rank $4$, degree $g+4$, and $\mathscr{F}$ of rank $5$ degree $2g+8$. Define $\mathscr{B}_{5,g}$ to be the $\gg_m$-torsor over $\V_{4,g+4}\times_{\BPGL_2}\V_{5,2g+8}$ associated to the bundle $\pi_*(\det \mathscr{E}^{\otimes2}\otimes \det \mathscr{F}^{\vee})$, which is a line bundle by cohomology and base change. It has first Chern class $2a_1-b_1$. The stack $\mathscr{B}_{5,g}$ parametrizes pairs of vector bundles $(E,F)$ on a $\p^1$-fibration together with an isomorphism between $(\det E)^{\otimes 2}$ and $\det F$. By Lemmas \ref{int-pic} and \ref{gm},
\begin{equation} \label{bp5}
\Pic(\mathscr{B}_{5,g})=\zz(\epsilon a_1)\oplus \zz a_2'\oplus \zz b_2'.
\end{equation}

\section{Trigonal}\label{trigonal}
The Picard group of $\Hp_{3,g}$ was computed by Bolognesi--Vistoli \cite[Theorem 1.1]{BV} when $g\geq 3$. As a warm-up for $k = 4,5$, we give a slightly different proof of their result; we also treat the case $g = 2$, and compute $\Pic(\Hp_{3,2})$.

Let us recall the linear algebraic data associated to a degree $3$ cover, as developed by Miranda \cite{M}, and later Casnati--Ekedahl \cite{CE}. For more details in our context see also \cite{BV} and \cite[Section 3.1]{part1}.
Given a degree $3$ cover $\alpha: C \to \pp^1$, we associate a rank $2$ vector bundle $E_\alpha := (\alpha_*\O_C/\O_{\pp^1})^\vee$ on $\pp^1$.
The cover naturally factors through an embedding $C \subset \pp E_\alpha^\vee \to \pp^1$ and $C \subset \pp E_\alpha^\vee$ is defined as the zero locus of a section
\[\eta_\alpha \in H^0(\pp^1, \det E_\alpha^\vee \otimes \Sym^3 E_\alpha) \cong H^0(\pp E_{\alpha}, \det E_\alpha^\vee \otimes \O_{\pp E_\alpha}(3)).\]
The association of $\alpha$ with $E_\alpha$ defines a map $\Hp_{3,g} \to \mathscr{B}_{3,g} := \mathscr{V}_{2,g+2}$. Let $\pi: \mathscr{P} \to \mathscr{B}_{3,g}$ be the universal $\pp^1$-fibration, equipped with universal rank $2$ bundle $\mathscr{E}$. We define $\mathscr{B}_{3,g}'$ to be the locus where $\det \mathscr{E}^\vee \otimes \Sym^3 \mathscr{E}$ is globally generated on the fibers of $\pi$. Equivalently, $\mathscr{B}_{3,g}'$ is the locus where
$\mathscr{E}$ has splitting type $(e_1, e_2)$ for $e_1 \leq e_2$ and $2e_1 - e_2 \geq 0$ on fibers of $\pi$.

Let $\mathscr{X}_{3,g}'$ be the total space of the vector bundle $\pi_*(\det \mathscr{E}^\vee \otimes \Sym^3 \mathscr{E})|_{\mathscr{B}_{3,g}'}$, which is locally free by the theorem on cohomology and base change, on $\mathscr{B}_{3,g}'$. 
Arguing exactly as in \cite[Lemma 5.1]{part1}, one sees that the association of $\alpha: C \to \pp^1$ with $(E_\alpha, \eta_\alpha)$ defines an open embedding of $\mathscr{H}_{3,g}$ into $\mathscr{X}_{3,g}'$. 
Let $\mathscr{D}_{3,g} := \mathscr{X}_{3,g}' \smallsetminus \mathscr{H}_{3,g}$ be the closed complement.

At this point, we have described stacks and morphisms
\[\mathscr{D}_{3,g} \to \mathscr{X}_{3,g}' \rightarrow \mathscr{B}_{3,g}' \to \mathscr{B}_{3,g} = \mathscr{V}_{2,g+3} \to \BPGL_2,\]
Base changing by $\BSL_2 \to \BPGL_2$, we obtain the stacks and morphisms studied in \cite[Section 4.2]{part2}.
\[\Delta_{3,g} \to \mathcal{X}_{3,g}' \rightarrow \mathcal{B}_{3,g}' \to \mathcal{B}_{3,g} = \mathcal{V}_{2,g+2} \to \BSL_2.\]
In \cite[Equation 4.5]{part2}, we showed that $\Delta_{3,g}$ is irreducible with fundamental class given by
\[[\Delta_{3,g}] =(8g + 12)a_1 - 9a_2' \in \Pic(\mathcal{X}_{3,g}') \cong \Pic(\mathcal{B}_{3,g}') .\]

Recall that $\Pic(\V_{2,g+2}) \to \Pic(\mathcal{V}_{2,g+2})$ is injective (Lemma \ref{int-pic}).
Applying Lemma \ref{comparison}, we also see that $\Pic(\mathscr{B}'_{3,g}) \to \Pic(\mathcal{B}'_{3,g})$ is injective, and then so too is $\Pic(\mathscr{X}'_{3,g}) \to \Pic(\mathcal{X}'_{3,g})$. Since $\mathscr{D}_{3,g}$ pulls back to $\Delta_{3,g}$,
it follows that $\mathscr{D}_{3,g}$ is irreducible of class
\[[\mathscr{D}_{3,g}] = \frac{8g+12}{\epsilon}(\epsilon a_1) - 9a_2' \in \Pic(\mathscr{X}_{3,g}') \cong \Pic(\mathscr{B}_{3,g}').\]
where $\epsilon$ is as in \eqref{epsilondef}. (Recall $\epsilon$ is always $1$ or $2$ so the coefficient above is an integer.)

Now, by excision, we have
\[\Pic(\mathscr{H}_{3,g}) = \frac{\Pic(\mathscr{X}_{3,g}')}{\langle [\mathscr{D}_{3,g}] \rangle} =  \frac{\Pic(\mathscr{B}_{3,g}')}{\langle [\mathscr{D}_{3,g}]\rangle}.\]
When $g > 2$, the complement of $\mathscr{B}_{3,g}' \subset \mathscr{B}_{3,g}$ has codimension at least $2$ by \cite[p. 16]{part2}. Hence, $\Pic(\mathscr{B}_{3,g}') = \Pic(\mathscr{B}_{3,g}) = \zz(\epsilon a_1) \oplus \zz a_2'$. Therefore, for $g > 2$, we have
\[\Pic(\mathscr{H}_{3,g}) = \frac{\zz(\epsilon a_1) \oplus \zz a_2'}{\langle(\frac{8g+12}{\epsilon})a_1 - 9a_2' \rangle }
    \cong \begin{cases} 
         \zz & \text{if $g \neq 0 \pmod 3$ and $g\neq 2$} \\ \zz \oplus \zz/3 \zz & \text{if $g = 0 \pmod 3$ and $g \neq 3 \pmod 9$} \\ \zz \oplus \zz/9 \zz & \text{if $g = 3 \pmod 9$.} \end{cases}\]

Meahwhile, when $g = 2$, the complement of $\mathscr{B}_{3,g}' \subset \mathscr{B}_{3,g}$ is an irreducible divisor corresponding to the locus where the universal bundle $\mathscr{E}$ has splitting type $(1, 3)$ on the fibers of $\mathscr{P} \to \mathscr{B}_{3,g}$.
As found in the proof of \cite[Lemma 4.3]{part1}, the (pullback to $\Pic(\B_{3,g})$ of the) class of this splitting locus is
\begin{equation} \label{s13}
s_{1,3} = a_2' - 2a_1 \in \Pic(\mathscr{B}_{3,g}) \subseteq \Pic(\mathcal{B}_{3,g}).
\end{equation}
Hence, noting that $\epsilon = 1$ when $g = 2$, we have
\[\Pic(\Hp_{3,2}) = \frac{\zz a_1 \oplus \zz a_2'}{\langle 28 a_1 - 9a_2', a_2' - 2a_1\rangle} \cong \zz/10\zz.\]

\subsection{Generating line bundles}\label{gb3}
One natural class on $\Hp_{3,g}$ is $\lambda :=c_1(f_* \omega_f)$ (which is pulled back from $\M_g$). Using 
 Example \ref{exc1}, we compute that the pullback of $\lambda$ to $\H_{3,2}$ is
\[\lambda = c_1(f_*\omega_f) = c_1(\pi_*(\alpha_* \omega_\alpha) \otimes \omega_\pi) = c_1(\pi_*\E(-2)) = (g+1)a_1 - a_2'.\]
(See \eqref{univdiagram} for definitions of the maps $f, \alpha, \pi$).
Note that when $g$ is odd, the coefficient of $a_1$ is even, so using  Lemma \ref{int-pic}, this class lies in the subgroup $\Pic(\mathscr{H}_{3,g})$, as it must.
In the case of genus $2$, we have $\lambda = a_1 + s_{1,3}$ (where $s_{1,3}$ is the relation in \eqref{s13}), so $\lambda$ generates $\Pic(\Hp_{3,2})$.
This is not surprising: in \cite{V3}, Vistoli
computed the integral Chow ring of the stack $\M_2$ and found in particular that $\Pic(\M_{2}) = \zz/10 \zz$, generated by $\lambda$. This means that the pullback map $\Pic(\M_2) \to \Pic(\Hp_{3,2})$ is an isomorphism.

We note here a corollary of this fact for use in the later sections of the paper.
Let $\mathscr{P}ic^k$ be the universal Picard stack over $\M_2$. Over a scheme $S$, its objects are families of smooth curves $\mathcal{C}\rightarrow S$ of genus $2$ together with a line bundle $\mathcal{L}$ of relative degree $k$ on the fibers. The group $\gg_m$ injects into the automorphism group of every object by scaling the line bundle. One can form the so-called $\gg_m$-rigidifcation of $\mathscr{P}ic^k$, which is a stack $\mathscr{P}^{k}$ such that $\mathscr{P}ic^k\rightarrow \mathscr{P}^k$ is a $\gg_m$-banded gerbe. 


\begin{cor} \label{hack}
Let $\mathscr{P}^k \to \M_2$ be as above. Then the pullback map $\Pic(\M_2) \to \Pic(\mathscr{P}^k)$ is injective.
\end{cor}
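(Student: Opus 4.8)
The plan is to reduce to two representative values of $k$ and, in each case, to factor the pullback $\Pic(\M_2)\to\Pic(\mathscr{P}^k)$ through a Picard group we can control. Since $2g-2=2$ when $g=2$, tensoring with the $m$-th power of the relative dualizing sheaf induces an isomorphism $\mathscr{P}ic^k\cong\mathscr{P}ic^{k+2m}$ over $\M_2$ for every $m\in\zz$, and this descends to an isomorphism $\mathscr{P}^k\cong\mathscr{P}^{k+2m}$ of the $\gg_m$-rigidifications over $\M_2$ (the $\gg_m$-actions by scaling are matched up by the twist). Hence it suffices to treat $k=0$ and $k=3$.

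For $k=0$, and therefore for all even $k$: the morphism $\M_2\to\mathscr{P}^0$ sending $C$ to $(C,\O_C)$ is a section of the structure map $\pi\colon\mathscr{P}^0\to\M_2$, so $\pi^*\colon\Pic(\M_2)\to\Pic(\mathscr{P}^0)$ is split injective.

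For $k=3$, and therefore for all odd $k$: there is a natural morphism $h\colon\H_{3,2}\to\mathscr{P}^3$ sending a family $\mathcal{C}\xrightarrow{\alpha}\pp V\to S$ in $\H_{3,2}(S)$ to the pair $(\mathcal{C},\alpha^*\O_{\pp V}(1))$, which is a relative degree $3$ line bundle on $\mathcal{C}/S$. Crucially this uses that the $\SL_2$-quotient $\H_{3,2}$ carries the relative line bundle $\O_{\pp V}(1)$; this is why one works with $\H_{3,2}$ in place of $\Hp_{3,2}$ here. Composing $h$ with $\mathscr{P}^3\to\M_2$ recovers the forgetful map $\H_{3,2}\to\M_2$, so $h^*\circ\pi^*$ equals pullback along $\H_{3,2}\to\M_2$, under which $\lambda$ maps to $\lambda$. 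I claim this pullback is an isomorphism. Indeed, for $g=2$ and $k=3$ one has $g+k-1=4$ even, hence $\epsilon_{3,2}=1$, so $\Pic(\mathcal{B}_{3,2})=\Pic(\mathscr{B}_{3,2})$ and the computation of $\Pic(\Hp_{3,2})$ carried out above applies verbatim to give $\Pic(\H_{3,2})=\zz/10\zz$, again generated by $\lambda$. (Alternatively: $\H_{3,2}\to\Hp_{3,2}$ is a $\mu_2$-gerbe, so $\Pic(\Hp_{3,2})\hookrightarrow\Pic(\H_{3,2})$, and $\Pic(\M_2)\to\Pic(\Hp_{3,2})$ was shown above to be an isomorphism.) Thus $h^*\circ\pi^*$ is an isomorphism, and in particular $\pi^*\colon\Pic(\M_2)\to\Pic(\mathscr{P}^3)$ is injective, as desired.

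The real content is the already-established isomorphism $\Pic(\M_2)\cong\Pic(\Hp_{3,2})$, so I do not expect a conceptual obstacle. The main point requiring care is purely formal bookkeeping with the $\gg_m$-rigidification: checking that $h$ and the zero section genuinely define morphisms into the rigidified stacks $\mathscr{P}^3$ and $\mathscr{P}^0$ (rather than merely into $\mathscr{P}ic^3$, $\mathscr{P}ic^0$), and that the $\omega$-twisting equivalences descend to the rigidifications compatibly with the structure maps to $\M_2$.
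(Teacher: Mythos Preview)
Your argument is correct and follows the same overall strategy as the paper: reduce modulo $2$ to two values of $k$, use a section of $\mathscr{P}^k\to\M_2$ for the even case, and factor through the trigonal Hurwitz space for the odd case.

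There are two small differences in execution. For even $k$ the paper uses $k=2$ (section given by the canonical bundle) rather than $k=0$; this is cosmetic. For odd $k$, the paper observes that $\Hp_{3,2}$ is itself an open substack of $\mathscr{P}^3$: every degree $3$ line bundle on a genus $2$ curve has exactly two sections, so the base-point-free locus in $\mathscr{P}^3$ is identified with $\Hp_{3,2}$ (the class $[L]$ in the rigidification is well-defined even when the $\pp^1$-fibration has no global $\O(1)$, precisely because the $\gg_m$-ambiguity is quotiented out). Thus the paper factors the isomorphism $\Pic(\M_2)\to\Pic(\Hp_{3,2})$ directly through $\Pic(\mathscr{P}^3)$. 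Your route through $\H_{3,2}$ is a harmless detour: the concern that one needs an honest $\O(1)$ to map to $\mathscr{P}^3$ is what the rigidification is designed to absorb, but working with $\H_{3,2}$ sidesteps the issue entirely and your computation $\Pic(\H_{3,2})=\zz/10\zz$ (valid since $\epsilon_{3,2}=1$) gives the needed injectivity just as well.
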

\begin{proof}
There are natural isomorphisms $\mathscr{P}^k \cong \mathscr{P}^{k+2}$ (given by tensoring with the canonical), so it suffices to prove the claim for $k = 2$ and $k = 3$. When $k = 2$, the canonical line bundle gives a section $\M_2 \to \mathscr{P}^k$, so the pullback map must be injective. 

Now consider the case $k = 3$. Every degree $3$ line bundle on a genus $2$ curve has $2$ sections. Therefore, $\Hp_{3,2}$ is naturally an open substack inside $\mathscr{P}^3$. (It is the complement of the universal curve $\mathscr{C} \hookrightarrow \mathscr{P}^3$ embedded by summing each point with a canonical divisor.) The isomorphism $\Pic(\M_2) \to \Pic(\Hp_{3,2})$ factors through $\Pic(\M_2) \to \Pic(\mathscr{P}^3)$, so the latter must also be injective. 
\end{proof}

For $g \neq 2, 5$, however, $\lambda$ cannot be used as a generator of $\Pic(\Hp_{3,g})$. (This follows from the fact that
$\det \left(\begin{matrix} \frac{g+1}{\epsilon} & -1 \\ \frac{8g+12}{\epsilon} & -9 \end{matrix}\right) = \frac{g - 3}{\epsilon}$ is not a unit, unless $g = 2$ or $5$.)

To describe line bundles generating $\Pic(\Hp_{3,g})$,
let $\mathscr{E}$ be the universal rank $2$ bundle on $\pi: \mathscr{P} \to \mathscr{H}_{3,g}$ (recall $\mathscr{E} = (\alpha_* \O_{\mathscr{C}}/\O_{\mathscr{P}})^\vee$.)
For $g > 2$, we can generate the free part of $\Pic(\Hp_{3,g})$ by
\[\mathscr{L} = \begin{cases} \pi_*\left(\det \mathscr{E} \otimes \omega_{\pi}^{\otimes (g+2)/2}\right) & \text{if $g$ even} \\ \\
\pi_*\left((\det \mathscr{E})^{\otimes 2} \otimes \omega_{\pi}^{\otimes (g+2)} \right) & \text{if $g$ odd} \end{cases} \qquad \text{which has} \qquad c_1(\mathscr{L}_1) =  \epsilon a_1.\]
When $g = 0 \pmod 3$ and $g \neq 3 \pmod 9$, the torsion subgroup is generated by
\[\frac{8g+12}{3}a_1 - 3a_2'= 3\lambda - \left(\frac{g-3}{3\epsilon}\right) c_1(\mathscr{L}).\]
When $g = 3 \pmod 9$, the torsion subgroup is generated by
\[\frac{8g+12}{9} a_1 - a_2' = \lambda - \left(\frac{g - 3}{9\epsilon} \right) c_1(\mathscr{L}).\]

\subsection{Simple branching}
Let $T \subset \Hp_{3,g}$ be the divisor of covers with a point of triple ramification, as defined in the introduction (see Figure \ref{TD}). In \cite[Proposition 2.8]{DP2}, Deopurkar--Patel compute the class of $T$. In terms of our generators, we have
\[T = (24g+36)a_1 -24a_2'.\]

\begin{proof}[Proof of Corollary \ref{mc}(1)]
Using excision, for $g \geq 3$, we have
\[\Pic(\Hp_{3,g}^s) = \Pic(\Hp_{3,g})/\langle T \rangle = \frac{\zz(\epsilon a_1) \oplus \zz a_2'}{\langle \left(\frac{8g+12}{\epsilon}\right)\epsilon a_1 - 9a_2', \left(\frac{24g+36}{\epsilon}\right) \epsilon a_1 - 24a_2'\rangle}\]
Now observe that
\begin{equation} \label{cb}
\left(\begin{matrix} -8 & 3 \\ -3 & 1 \end{matrix}\right) \left(\begin{matrix} (8g+12)/\epsilon & -9 \\ (24g+36)/\epsilon & -24 \end{matrix}\right) = \left(\begin{matrix} (8g + 12)/\epsilon & 0 \\ 0 & 3 \end{matrix} \right).
\end{equation}
The matrix on the left of \eqref{cb} is invertible over $\zz$. Thus, $\Pic(\Hp_{3,g}^s)$ is the sum of two cyclic groups with orders given by the diagonal entries of the matrix on the right of \eqref{cb}.
This completes the proof for $g \geq 3$.

Finally, for $g = 2$, we already know $a_2'= 2a_1$ by \eqref{s13} and that $10a_1 = 0$ in $\Pic(\Hp_{3,g})$. When we remove $T$, this creates one additional relation $0 = T = 84a_1-24a_2' = 36a_1$ in $\Pic(\Hp_{3,g}^s)$. We have $\gcd(10,36) = 2$, so $\Pic(\Hp_{3,2}^s) = \zz/2\zz$.
\end{proof}

\section{Tetragonal}\label{tetragonal}

We begin by briefly recalling the linear algebraic data associated to a degree $4$ cover, as developed by Casnati--Ekedahl \cite{CE}. For more details in our context, see \cite[Section 3.2]{part1}. Given a degree $4$ cover $\alpha: C \to \pp^1$, we associate two vector bundles on $\pp^1$:
\[E_\alpha := (\alpha_*\O_C/\O_{\pp^1})^\vee = \ker(\alpha_*\omega_\alpha \to \O_{\pp^1}) \qquad \text{and} \qquad F_\alpha := \ker(\Sym^2 E_\alpha \to \alpha_*\omega_{\alpha}^{\otimes 2}).\]
The first is rank $3$ and the second is rank $2$.
If $C$ has genus $g$, then both bundles have degree $g+3$.
Geometrically, the curve $C$ is embedded in $\gamma:\p E^{\vee}_{\alpha}\rightarrow \p^1$ as the zero locus of a section
\[
\delta_{\alpha} \in H^0(\p E_{\alpha}^{\vee},\O_{\p E^{\vee}_{\alpha}}(2)\otimes \gamma^*F_{\alpha}^{\vee}).
\]
In each fiber of $\gamma$, the four points are the base locus of a pencil of conics parametrized by $F_\alpha$. We can also think of $\delta_{\alpha}$ as a section of a bundle on $\p^1$ through the natural isomorphism
\[
H^0(\p E_{\alpha}^{\vee},\O_{\p E^{\vee}_{\alpha}}(2)\otimes \gamma^*F_{\alpha}^{\vee})\cong H^0(\p^1, \Sym^2 E_{\alpha}\otimes F_{\alpha}^{\vee}).
\]
The cover $\alpha$ also determines an isomorphism $\phi_\alpha: \det E_\alpha \cong \det F_\alpha$ (see \cite[Section 3.2]{part1})

The association of $\alpha:C\rightarrow \p^1$ with the triple $(E_{\alpha}, F_{\alpha}, \phi_\alpha)$ gives rise to a map of $\Hp_{4,g}$ to the base stack $\mathscr{B}_{4,g}$ defined in \ref{b4def}.
Unlike in the degree $3$ case, the map $\Hp_{4,g} \to \mathscr{B}_{4,g}$ does \emph{not} factor through a vector bundle over $\mathscr{B}_{4,g}$. Nevertheless, we shall define an open substack $\mathscr{H}_{4,g}'$ that does admit such a nice description. 
The key fact, to be established in Lemma \ref{k4}, is that the complement of $\Hp_{4,g}' \subset \Hp_{4,g}$ has codimension at least $2$ for all $g \neq 3$. Thus, it will suffice to compute $\Pic(\Hp_{4,g}')$.

\subsection{The open $\Hp'_{4,g}$}
First define $\mathscr{B}_{4,g}' := \mathscr{B}_{4,g}  \smallsetminus R^1\pi_*(\mathscr{F}^\vee \otimes \Sym^2 \mathscr{E})$.
We define $\Hp'_{4,g}\subset \Hp_{4,g}$ to be the base change of the map $\Hp_{4,g}\rightarrow \mathscr{B}_{4,g}$ along the open embedding $\mathscr{B}'_{4,g}\hookrightarrow \mathscr{B}_{4,g}$. 
The key property of $\Hp_{4,g}'$ is that the map $\Hp_{4,g}' \to \mathscr{B}_{4,g}'$ factors through an open inclusion in the total space of a vector bundle (by an argument identical to \cite[Lemma 5.3]{part1}):
\begin{equation} \label{v4}
\Hp'_{4,g}\hookrightarrow\mathscr{X}'_{4,g} := \pi_*(\mathscr{F}^{\vee}\otimes \Sym^2 \mathscr{E})|_{\mathscr{B}_{4,g}'}.
\end{equation}

As promised, we now show that the complement of $\Hp'_{4,g} \subset \Hp_{4,g}$ has codimension at least $2$ (except when $g = 3$).  This essentially follows from earlier work of Deopurkar--Patel.

\begin{lem} \label{k4}
If $g \neq 3$, every component of the complement of $\Hp'_{4,g}\subset \Hp_{4,g}$ has codimension at least $2$. Hence, there is an isomorphism $\Pic(\Hp_{4,g}) \cong \Pic(\Hp_{4,g}')$.
\end{lem}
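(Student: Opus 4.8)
The plan is to identify precisely which covers lie in the complement of $\Hp'_{4,g} \subset \Hp_{4,g}$ and then bound the dimension of the locus they form. By construction, $\Hp'_{4,g}$ is the preimage of the open substack $\mathscr{B}_{4,g}' = \mathscr{B}_{4,g} \smallsetminus R^1\pi_*(\mathscr{F}^\vee \otimes \Sym^2 \mathscr{E})$, so the complement consists of covers $\alpha : C \to \pp^1$ for which $R^1\pi_*(F_\alpha^\vee \otimes \Sym^2 E_\alpha) \neq 0$. Concretely, $F_\alpha^\vee \otimes \Sym^2 E_\alpha$ is a rank $12$ bundle of degree $g+3$ (times an appropriate factor) on $\pp^1$, and the failure of $H^1$ to vanish is governed by the splitting types of $E_\alpha$ and $F_\alpha$ being too unbalanced. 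So the first step is to translate the condition ``$R^1 \pi_*(F^\vee \otimes \Sym^2 E) \neq 0$'' into a condition on the splitting types of $E_\alpha$ and $F_\alpha$, following the Casnati--Ekedahl/Deopurkar--Patel analysis.

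The second step is the dimension count. This is where I would invoke the earlier work of Deopurkar--Patel directly: they stratify the Hurwitz space (or the Maroni-type/scrollar invariant strata) and compute the codimensions of the loci where $E_\alpha$ or $F_\alpha$ has prescribed unbalanced splitting type. The claim is that, except when $g = 3$, every such ``bad'' stratum has codimension at least $2$ in $\Hp_{4,g}$. The $g = 3$ exception should appear because for small genus one of these unbalanced strata becomes a divisor (analogous to how the Maroni divisor behaves in low genus for trigonal curves, or the $s_{1,3}$ splitting divisor that showed up in genus $2$ for $k=3$ in Section \ref{trigonal}). I would make the relevant citation to \cite{DP} or \cite{DP2} precise, extract the codimension formula, and check the inequality, isolating $g = 3$ as the single failure.

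Once the codimension-$\geq 2$ statement is established, the conclusion $\Pic(\Hp_{4,g}) \cong \Pic(\Hp_{4,g}')$ is immediate: removing a closed substack of codimension at least $2$ from a smooth stack does not change the Picard group (this is the excision/purity fact already used repeatedly in the excerpt, e.g. in the passage $\Pic(\mathscr{B}_{3,g}') = \Pic(\mathscr{B}_{3,g})$ for $g > 2$). Since $\Hp_{4,g}$ is smooth over the base field (it is an open substack of a vector bundle over $\mathscr{B}_{4,g}'$, etc.), this applies.

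The main obstacle I anticipate is the bookkeeping in the dimension count: correctly matching the vanishing locus $R^1\pi_*(\mathscr{F}^\vee \otimes \Sym^2 \mathscr{E}) \neq 0$ with the union of splitting-type strata, and then pulling the right codimension estimates out of the Deopurkar--Patel papers in a form that cleanly gives ``$\geq 2$ except for $g = 3$.'' There is a genuine subtlety in that the complement could a priori have several components (coming from unbalancedness of $E$, of $F$, or both simultaneously), and each must be checked; the genus $3$ case is precisely where the most unbalanced-$E$ stratum (or the analogous $F$ stratum) drops to codimension $1$, so the argument has to be sharp enough to see that and not merely give a generic bound. Everything after the dimension count is formal.
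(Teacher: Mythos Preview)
Your proposal is correct and follows essentially the same approach as the paper: translate the $H^1$-nonvanishing condition into a splitting-type inequality, then invoke Deopurkar--Patel's enumeration of the divisorial strata in the complement of $M(E_{\gen}, F_{\gen})$ and verify that each of those (the ``one-off'' strata $\overline{M(E_{\gen}, F_1)}$ and $\overline{M(E_1, F_{\gen})}$) still satisfies the $H^1$-vanishing, hence lies in $\Hp'_{4,g}$. One refinement to your description of the $g=3$ exception: it arises not from a single very-unbalanced-$E$ or very-unbalanced-$F$ stratum dropping to codimension one, but from the \emph{simultaneous} one-off stratum $\overline{M(E_1, F_1)} = \overline{M(\O(1)\oplus\O(2)\oplus\O(3),\, \O(2)\oplus\O(4))}$, which is divisorial only when $g=3$ and does fail the $H^1$-vanishing.
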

\begin{proof}
Following the notation of \cite{DP},
let $M(E, F) \subset \Hp_{4,g}$ denote the locus of covers $\alpha$ with $E_\alpha \cong E$ and $F_\alpha \cong F$. 
The complement of $\Hp'_{4,g}\subset \Hp_{4,g}$ is the union of $M(E, F)$ such that
\begin{equation} \label{cc4}
h^1(\pp^1, F^\vee \otimes \Sym^2 E) > 0.
\end{equation}
If $E = \O(e_1) \oplus \O(e_2) \oplus \O(e_3)$ with $e_1 \leq e_2 \leq e_3$ and $F = \O(f_1) \oplus \O(f_2)$ with $f_1 \leq f_2$, then \eqref{cc4} is equivalent to $2e_1 - f_1 \leq -2$.

Next, let
$E_{\gen}$ and $F_{\gen}$ denote the balanced bundles of rank $3$ and $2$ and degree $g+3$.
First note that $h^1(F_{\gen}^\vee \otimes \Sym^2 E_{\gen}) = 0$: this is equivalent to $2 \lfloor \frac{g+3}{3} \rfloor - \lceil \frac{g+3}{2} \rceil \geq -1$.
This says that $M(E_{\gen}, F_{\gen}) \subseteq \Hp_{4,g}'$. Hence, any divisorial component of $\Hp_{4,g} \smallsetminus \Hp_{4,g}'$ is contained in a divisorial component of $\Hp_{4,g} \smallsetminus M(E_{\gen}, F_{\gen})$.

Next, let us define bundles that are ``one-off" from balanced
\begin{align*}
    F_1 &:= \O(n-1) \oplus \O(n+1) & \qquad & \text{if $n = \frac{g+3}{2}$ is an integer} \\
    E_1 &:= \O(m-1) \oplus \O(m) \oplus \O(m+1) & \qquad &\text{if $m = \frac{g+3}{3}$ is an integer.}
\end{align*}
In \cite[p. 20]{DP}, Deopurkar--Patel enumerate the divisorial components of $\Hp_{4,g} \smallsetminus M(E_{\gen}, F_{\gen})$ and show that, for $g \neq 3$, they are always of the form \begin{align}
    &\overline{M(E_{\gen}, F_1)} &\qquad &\text{if $2 \mid g+3$} \label{oo} \\
    &\overline{M(E_1, F_{\gen})} &\qquad &\text{if $3 \mid g+3$}. \label{ooo}
\end{align}
Note that we are using the irreducibility of $M$ and $CE$ in \cite[Propositions 4.5 and 4.7]{DP} to write these divisors as the closures above.
Meanwhile, when $g = 3$, the stratum
\begin{equation} \label{weird3}
\overline{M(E_1, F_1)} = \overline{M(\O(1) \oplus \O(2) \oplus \O(3), \O(2) \oplus \O(4))} 
\end{equation}
is also a divisor. This divisor does not lie in $\Hp_{4,g}'$.

One readily checks that
\begin{equation} \label{van1}
h^1(F_1^\vee \otimes \Sym^2 E_{\gen}) = h^1(F_{\gen}^\vee \otimes \Sym^2 E_1) = 0.
\end{equation}
Hence, when they are defined, $\Hp_{4,g}'$ contains each of 
\[M(E_{\gen}, F_{\gen}), \quad M(E_{\gen}, F_1), \quad M(E_1, F_{\gen}),\]
and, for $g \neq 3$, all other possible $M(E, F)$ have codimension at least $2$.
\end{proof}

\begin{rem}
Each $M(E, F)$ can be constructed directly as a global quotient, giving rise to a bound on $\Pic(M(E, F))$.
Deopurkar--Patel use their enumeration of the components of $\H_{4,g} \smallsetminus M(E_{\gen}, F_{\gen})$ to count ranks and prove the Picard rank conjecture for $k \leq 5$. 

The new innovation in our work is that we have built a larger open $\Hp_{4,g}'$ which contains several $M(E, F)$
and, in particular, is not missing any divisorial components (when $g \neq 3$). Hence, we see $\Pic(\Hp_{4,g}) = \Pic(\Hp_{4,g}')$, and we can compute the later \emph{integrally} using excision on $\Hp_{4,g}' \subset \mathscr{X}_{4,g}'$.
\end{rem}

\begin{lem} \label{g3lem}
When $g = 3$, the complement of $\Hp_{4,3}' \subset \Hp_{4,3}$ is an irreducible divisor whose class lies in the subgroup generated by $\epsilon a_1$ and $a_2'$ (these classes are defined on all of $\Hp_{4,3}$ via pull back along $\Pic(\mathscr{B}_{4,3}) \to \Pic(\mathscr{H}_{4,g})$.) 
\end{lem}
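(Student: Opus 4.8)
The plan is to first pin down the complement as an irreducible divisor, and then to compute its class.

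For the first step I would continue the analysis from the proof of Lemma~\ref{k4}. That argument already shows, via the enumeration of divisorial strata in \cite[p.~20]{DP}, that the unique divisorial component of $\Hp_{4,3}\smallsetminus M(E_{\gen},F_{\gen})$ not contained in $\Hp'_{4,3}$ is $\overline{M(E_1,F_1)}$ with $E_1=\O(1)\oplus\O(2)\oplus\O(3)$ and $F_1=\O(2)\oplus\O(4)$. To see that this is the \emph{entire} complement $\Hp_{4,3}\smallsetminus\Hp'_{4,3}=\{R^1\pi_*(\mathscr{F}^\vee\otimes\Sym^2\mathscr{E})\neq 0\}$, I would use that every cover $\alpha$ in $\Hp_{4,3}$ has $h^0(\pp^1,\alpha_*\O_C)=1$, so that $E_\alpha$ has strictly positive splitting type; writing $e_1$ for the smallest summand of $E_\alpha$ and $f_2$ for the largest summand of $F_\alpha$, the failure of $h^1(F_\alpha^\vee\otimes\Sym^2 E_\alpha)=0$ is exactly $2e_1-f_2\leq -2$, which then forces $e_1=1$ and $f_2\geq 4$, so $(E_\alpha,F_\alpha)$ specializes $(E_1,F_1)$. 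By semicontinuity of splitting type and the irreducibility of the loci $M$ and $CE$ (\cite[Propositions~4.5, 4.7]{DP}), the complement is exactly $\overline{M(E_1,F_1)}$, an irreducible divisor.

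For the second step, the key observation is that $\overline{M(E_1,F_1)}$ is contained in the Maroni locus $\Sigma\subseteq\Hp_{4,3}$ where $E_\alpha$ is unbalanced, which is the preimage under $\rho\colon\Hp_{4,3}\to\mathscr{B}_{4,3}\to\V_{3,g+3}$ of the divisor of unbalanced rank-$3$ bundles in $\V_{3,g+3}$. Since $\Pic(\V_{3,g+3})=\zz(\epsilon a_1)\oplus\zz a_2'$ (the rank-$3$ analogue of \eqref{pb3}), every divisor class pulled back from $\V_{3,g+3}$ lies in $\langle\epsilon a_1,a_2'\rangle$. The subtlety is that $\rho^{-1}(\Sigma)$ has two divisorial components: $\overline{M(E_1,F_1)}$, and a second component $\overline{M(E_1,F_{\gen})}$ along which $F_\alpha$ is balanced. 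This second component lies in $\Hp'_{4,3}$ (one has $h^1(F_{\gen}^\vee\otimes\Sym^2 E_1)=0$ by \eqref{van1}), and the restriction of $\rho$ to $\Hp'_{4,g}$ — a composite of the open immersion $\Hp'_{4,g}\hookrightarrow\mathscr{X}'_{4,g}$, the affine-bundle projection $\mathscr{X}'_{4,g}\to\mathscr{B}'_{4,g}$, a $\gg_m$-torsor, and the smooth morphism $\V_{3,g+3}\times_{\BPGL_2}\V_{2,g+3}\to\V_{3,g+3}$ — is smooth; hence on $\Hp'_{4,3}$ the preimage of the Maroni divisor is reduced, so $[\overline{M(E_1,F_{\gen})}]$, which equals that preimage up to codimension $2$, already lies in $\langle\epsilon a_1,a_2'\rangle\subseteq\Pic(\Hp'_{4,3})$. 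Pulling the Maroni divisor back along $\rho$ itself, with multiplicity one along each of the two components (which I would verify; see below), gives $[\overline{M(E_1,F_{\gen})}]+[\overline{M(E_1,F_1)}]\in\langle\epsilon a_1,a_2'\rangle\subseteq\Pic(\Hp_{4,3})$, and subtracting off $[\overline{M(E_1,F_{\gen})}]$ — controlled by the previous sentence together with the exact sequence $\langle[\overline{M(E_1,F_1)}]\rangle\to\Pic(\Hp_{4,3})\to\Pic(\Hp'_{4,3})\to 0$ coming from the first step — yields the claim.

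I expect the main obstacle to be the bookkeeping at the end of the second step: one must check that $\overline{M(E_1,F_1)}$ occurs with multiplicity exactly one in $\rho^*(\text{Maroni divisor})$ — equivalently, that the failure of flatness of $\Hp_{4,3}\to\mathscr{B}_{4,3}$ along $\overline{M(E_1,F_1)}$, where $h^0(\mathscr{F}^\vee\otimes\Sym^2\mathscr{E})$ jumps up by one, is ``blow-up-like'' and introduces no extra multiplicity — and that lifting $[\overline{M(E_1,F_{\gen})}]\in\langle\epsilon a_1,a_2'\rangle$ from $\Pic(\Hp'_{4,3})$ back to $\Pic(\Hp_{4,3})$ does not reintroduce a $b_2'$-term. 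As an alternative to this bookkeeping, one could instead compute $[\overline{M(E_1,F_1)}]$ directly from an explicit quotient presentation of $M(E_1,F_1)$ in the style of \cite{DP,part2} and read off that the coefficient of $b_2'$ vanishes.
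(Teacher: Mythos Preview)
Your overall strategy is sound, but you have introduced a phantom component that makes the second step far more complicated than it needs to be. The stratum $M(E_1,F_{\gen})=M(\O(1)\oplus\O(2)\oplus\O(3),\,\O(3)\oplus\O(3))$ is \emph{empty} when $g=3$. The reason is that for a degree $4$ cover the Casnati--Ekedahl bundle $F_\alpha$ is constrained by $E_\alpha$: from the surjection $\Sym^2 E_\alpha\twoheadrightarrow\alpha_*\omega_\alpha^{\otimes 2}$ one gets $f_2\le 2e_2$ and $f_1\le e_1+e_2$ (see \cite[Proposition 5.6(2)]{part1}), so with $(e_1,e_2,e_3)=(1,2,3)$ and $f_1+f_2=6$ one is forced to have $(f_1,f_2)=(2,4)$. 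Thus the Maroni locus $\{E_\alpha\text{ unbalanced}\}\subset\Hp_{4,3}$ is \emph{exactly} $\overline{M(E_1,F_1)}$, and your decomposition into two components, the subtraction argument, and the lifting issue from $\Pic(\Hp'_{4,3})$ to $\Pic(\Hp_{4,3})$ all disappear.

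Once this is in place, your argument becomes the paper's: the complement divisor is precisely the splitting locus $\{\mathscr{E}\text{ has type }(1,2,3)\}$ on $\Hp_{4,3}$, and since it occurs in the expected codimension, the universal splitting locus formulas of \cite{L} express its fundamental class as a polynomial in the Chern classes of $\mathscr{E}$ alone --- hence in $\langle\epsilon a_1,a_2'\rangle$. This also resolves your multiplicity concern directly: you are not pulling back a divisor along a possibly non-flat map and hoping the cycle-theoretic preimage is reduced; rather, the splitting locus formula computes the fundamental class on the nose whenever the locus has the right codimension. So the ``alternative'' you mention at the end (compute the class by a direct formula) is in fact the shortest and cleanest route, and is what the paper does.
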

\begin{proof}
Continuing the notation of the previous lemma, first note that if \[M(\O(1) \oplus \O(2) \oplus \O(3), \O(f_1) \oplus \O(f_2)) \neq \varnothing\]
then by \cite[Proposition 5.6(2)]{part1}, we have $f_1 \leq 2$ and $f_2 \leq 4$ and $f_1 + f_2 = 6$, hence $f_1 = 2$ and $f_2 = 4$.
The divisor in \eqref{weird3} can therefore be viewed as the locus were the universal $\mathscr{E}$ (pulled back along $\Hp_{4,3} \to \mathscr{B}_{4,3}$) has splitting type $(1, 2, 3)$ on fibers of the universal $\pp^1$-fibration. 
 As a splitting locus for $\mathscr{E}$, this divisor occurs in the expected codimension, so its fundamental class is determined by the universal splitting loci formulas of \cite{L}. In particular, it can be expressed in terms of the classes $\epsilon a_1, a_2'$.
\end{proof}

\subsection{Excision}
Recall the inclusion of \eqref{v4} and let $\mathscr{D}_{4,g} := \mathscr{X}'_{4,g} \smallsetminus \Hp'_{4,g}$ be the complement.
By excision, we have a series of surjections (the middle map is an isomorphism because $\mathscr{X}'_{4,g}$ is a vector bundle over $\mathscr{B}_{4,g}'$):
\begin{equation} \label{s4}
\Pic(\mathscr{B}_{4,g}) \rightarrow  \Pic(\mathscr{B}_{4,g}') \cong \Pic(\mathscr{X}_{4,g}') \rightarrow \Pic(\mathscr{H}_{4,g}').
\end{equation}
Moreover, the fundamental class $[\mathscr{D}_{4,g}] \in \Pic(\mathscr{X}'_{4,g})$ lies in the kernel of the last map in \eqref{s4} (and it generates the kernel when $\mathscr{D}_{4,g}$ is irreducible.)

At this point we have defined a sequence of morphisms
\begin{equation} \label{scr4}\mathscr{D}_{4,g} \to \mathscr{X}_{4,g}' \rightarrow \mathscr{B}_{4,g}' \rightarrow \mathscr{B}_{4,g} \rightarrow \mathscr{V}_{3,g+3} \times_{\BPGL_2} \mathscr{V}_{2,g+3} \rightarrow \BPGL_2. 
\end{equation}

\begin{lem} \label{allc}
For $g \geq 2$, some combination of components of $\mathscr{D}_{4,g}$ has class
\[\frac{8g+20}{\epsilon}(\epsilon a_1) - 8a_2' - b_2' \in \Pic(\mathscr{X}'_{4,g}) \cong \Pic(\mathscr{B}_{4,g}'). \]
In particular, an integral relation holds in $\Pic(\Hp_{4,g}')$ expressing $b_2'$ in terms of $\epsilon a_1$ and $a_2'$.
\end{lem}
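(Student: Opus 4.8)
The plan is to identify a specific ``bad locus'' inside $\mathscr{X}'_{4,g}$ (equivalently, since $\mathscr{X}'_{4,g}$ is a vector bundle over $\mathscr{B}'_{4,g}$, inside $\mathscr{B}'_{4,g}$) whose class we can compute, and which is contained in the complement $\mathscr{D}_{4,g}$. The natural candidate is the locus where the section $\delta_\alpha$, viewed in $H^0(\pp^1, \Sym^2 E_\alpha \otimes F_\alpha^\vee)$, fails to cut out a smooth curve in the expected way — but a cleaner approach, following the degree $3$ computation in Section \ref{trigonal} and the companion paper \cite{part2}, is to base change the whole diagram \eqref{scr4} along $\BSL_2 \to \BPGL_2$ to get $\Delta_{4,g} \to \mathcal{X}'_{4,g} \to \mathcal{B}'_{4,g}$, quote from \cite{part2} that some combination of components of $\Delta_{4,g}$ has class $(8g+20)a_1 - 8a_2' - b_2'$ in $\Pic(\mathcal{X}'_{4,g}) \cong \Pic(\mathcal{B}'_{4,g})$, and then descend.

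First I would invoke Lemma \ref{comparison} together with Lemma \ref{int-pic}: since $\Pic(\mathscr{V}_{3,g+3} \times_{\BPGL_2} \mathscr{V}_{2,g+3}) \hookrightarrow \Pic(\mathscr{U}_{3,g+3} \times_{\BSL_2} \mathscr{U}_{2,g+3})$ is injective, and $\mathscr{B}_{4,g}$ is a $\gg_m$-torsor over the former while $\mathcal{B}_{4,g}$ is the corresponding $\gg_m$-torsor over the latter, applying Lemma \ref{comparison} down the tower $\mathscr{B}_{4,g} \to \mathscr{B}'_{4,g}$ and then $\mathscr{X}'_{4,g}$ shows that $\Pic(\mathscr{X}'_{4,g}) \to \Pic(\mathcal{X}'_{4,g})$ is injective, with the classes $\epsilon a_1, a_2', b_2'$ mapping to $\epsilon a_1, a_2', b_2'$ (using \eqref{pb4} and the analogous description over $\BSL_2$). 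Next I would recall the computation of $[\Delta_{4,g}]$ from \cite{part2}: the divisorial part of $\mathcal{X}'_{4,g} \smallsetminus \mathcal{H}'_{4,g}$, or at least some explicit combination of its components (this is exactly the locus relevant to degeneracy of $\delta_\alpha$), has class $(8g+20)a_1 - 8a_2' - b_2'$; the coefficient of $a_1$ being a multiple of $\epsilon$ is automatic, and one rewrites this as $\frac{8g+20}{\epsilon}(\epsilon a_1) - 8a_2' - b_2'$. Since $\mathscr{D}_{4,g}$ pulls back to $\Delta_{4,g}$ under the smooth base change $\mathcal{X}'_{4,g} \to \mathscr{X}'_{4,g}$, and the pullback on Picard groups is injective and sends the listed generators to the same-named classes, the same formula holds for the corresponding combination of components of $\mathscr{D}_{4,g}$ in $\Pic(\mathscr{X}'_{4,g})$.

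Finally, since $[\mathscr{D}_{4,g}]$ (and hence any combination of its components) lies in the kernel of $\Pic(\mathscr{X}'_{4,g}) \to \Pic(\mathscr{H}'_{4,g})$ by excision (the surjections in \eqref{s4}), we get the relation $\frac{8g+20}{\epsilon}(\epsilon a_1) - 8a_2' - b_2' = 0$ in $\Pic(\mathscr{H}'_{4,g})$, which solves for $b_2'$ in terms of $\epsilon a_1$ and $a_2'$. The main obstacle is the input from \cite{part2}: one must make sure that the combination of components whose class is being quoted is genuinely supported on the complement $\mathscr{D}_{4,g}$ (i.e.\ that we have not accidentally included a component of $\mathscr{X}'_{4,g}$ that meets $\mathscr{H}'_{4,g}$), and that the class computation is carried out in $\Pic(\mathcal{X}'_{4,g}) \cong \Pic(\mathcal{B}'_{4,g})$ rather than only modulo lower-dimensional strata — this is where the hypothesis $g \geq 2$ and the precise geometry of the Casnati--Ekedahl resolution enter. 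The descent along $\BSL_2 \to \BPGL_2$, by contrast, is routine given Lemmas \ref{int-pic} and \ref{comparison}.
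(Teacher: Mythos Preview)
Your overall strategy matches the paper's: base change along $\BSL_2 \to \BPGL_2$, use Lemmas \ref{int-pic} and \ref{comparison} to get injectivity of $\Pic(\mathscr{X}'_{4,g}) \to \Pic(\mathcal{X}'_{4,g})$, obtain the desired class on the $\SL_2$ side from \cite{part2}, and descend. The difference lies in how that class is extracted from \cite{part2}, and this is exactly the ``main obstacle'' you flag.

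You propose to quote directly that some combination of components of $\Delta'_{4,g}$ has class $(8g+20)a_1 - 8a_2' - b_2'$ in $\Pic(\mathcal{X}'_{4,g})$. But as you note, this is not quite what \cite{part2} proves: the principal parts computation there is carried out on a smaller open $\mathcal{X}^\circ_{4,g} \subset \mathcal{X}'_{4,g}$, so one would need to argue separately that the formula extends (the remark following the lemma sketches how, via codimension estimates on rank-drop strata, but this is extra work). The paper sidesteps this entirely. It quotes from \cite[Lemma 5.2 and Equation 5.7]{part2} only that the \emph{relation} $(8g+20)a_1 - 8a_2' - b_2' = 0$ holds in $\Pic(\H_{4,g})$ --- a statement about the full Hurwitz stack, with no reference to $\mathcal{X}'_{4,g}$ --- and then restricts to the open $\H'_{4,g} = \mathcal{X}'_{4,g} \smallsetminus \Delta'_{4,g}$. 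Excision on this inclusion then forces the relation to be the class of some $\zz$-combination of components of $\Delta'_{4,g}$, automatically supported where it needs to be. This is a cleaner route past the obstacle you identified, and explains why no further geometric input beyond the citation is needed.
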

\begin{proof}
Base changing \eqref{scr4} by $\BSL_2 \to \BPGL_2$, we obtain the stacks and morphisms considered in \cite[Section 5.2]{part1} (below $\Delta_{4,g}'$ is the complement of the open inclusion $\H_{4,g}' \hookrightarrow \X_{4,g}'$ of \cite[Lemma 5.3]{part1}):
\[\Delta_{4,g}' \to \mathcal{X}_{4,g}' \rightarrow \mathcal{B}_{4,g}' \rightarrow \mathcal{B}_{4,g} \rightarrow \mathcal{V}_{3,g+3} \times_{\BSL_2} \mathcal{V}_{2,g+3} \rightarrow \BSL_2. \]
We claim some combination of components of $\Delta_{4,g}'$ has class
\[(8g + 20)a_1 - 8a_2' - b_2'\in \Pic(\mathcal{X}'_{4,g}) \cong \Pic(\mathcal{B}_{4,g}). \]
 This will establish the lemma since the map $\Pic(\mathscr{X}'_{4,g}) \to \Pic(\mathcal{X}'_{4,g})$
 sends the class of a component of $\mathscr{D}_{4,g}$ to the class of the corresponding component of $\Delta_{4,g}'$. Because $\Pic(\mathscr{X}'_{4,g}) \to \Pic(\mathcal{X}'_{4,g})$  is injective (by Lemma \ref{comparison}), classes are represented by the same formulas in either group.

By \cite[Lemma 5.2 and Equation 5.7]{part2}, we know that $(8g + 20)a_1 - 8a_2' - b_2' = 0$ in $\Pic(\H_{4,g})$, so this relation must also hold on the open substack $\H_{4,g}' \subset \H_{4,g}$. But, we also know $\H_{4,g}' \cong \mathcal{X}'_{4,g} \smallsetminus \Delta_{4,g}'$. By excision, every relation among $a_1, a_2', b_2'$ restricted to $\Pic(\H_{4,g}')$ comes from a class supported on $\Delta_{4,g}' \subset \mathcal{X}'_{4,g}$. That is, some combination of components of $\Delta_{4,g}'$ has class $(8g + 20)a_1 - 8a_2' - b_2' = 0$. The corresponding
combination of components of $\mathscr{D}_{4,g}$ will have the same class.
\end{proof}
\begin{rem}
In fact, the fundamental class of $\mathscr{D}_{4,g}$ 
has the class displayed in Lemma \ref{allc}.
For a more conceptual explanation, we sketch the following argument.
Recall that in \cite[Equation 5.7]{part2}, we computed the restriction of $[\Delta_{4,g}']$ to a slightly smaller open $\mathcal{X}_{4,g}^\circ \subset \mathcal{X}'_{4,g}$ via principal parts bundle techniques. For $g$ sufficiently large, the complement of $\mathcal{X}_{4,g}^\circ \subset \mathcal{X}'_{4,g}$ has codimension at least $2$, so the codimesnion $1$ calculation holds on all of $\mathcal{X}'_{4,g}$.
That is $[\Delta_{4,g}'] = (8g+20)a_1 - 8a_2' - b_2'$ and so $[\mathscr{D}_{4,g}]$ also has this class.

But even for smaller $g$, 
we claim the formula for $[\Delta_{4,g}]$ in \cite[Equation 5.7]{part2} holds on all of $\mathcal{X}_{4,g}'$. Although the principal parts map of \cite[Equation 5.4]{part2} need not be surjective over all of $\mathcal{X}_{4,g}'$ (so the vanishing locus $\tilde{\Delta}_{4,g}$ of the principal parts bundle map need not be a vector bundle) 
the calculation of the fundamental class holds so long as $\tilde{\Delta}_{4,g}$ has the correct codimension. One can verify this by stratifying the base by loci where the rank drops and checking that the strata where the rank drops by $\delta$ have codimension greater than $\delta$. This also establishes irreducibility of $\Delta_{4,g}'$ and $\mathscr{D}_{4,g}$ when $g \geq 4$. However, this fact is not actually necessary for our argument.
\end{rem}

\begin{proof}[Proof of Theorem \ref{Pic}(1) for $g \geq 4$]
By excision, we know that
\[\Pic(\Hp_{4,g}') \cong \frac{\Pic(\mathscr{X}_{4,g}')}{\text{classes supported on $\mathscr{D}_{4,g}$}}.\]
Meanwhile, $\Pic(\mathscr{X}_{4,g}')$ is a quotient of $\Pic(\mathscr{B}_{4,g}) =\zz (\epsilon a_1) \oplus \zz a_2' \oplus \zz b_2'$ (see \eqref{pb4}).
Hence, using Lemma \ref{allc}, we see $\Pic(\Hp_{4,g}')$ is a quotient of
\[\frac{\zz (\epsilon a_1) \oplus \zz a_2' \oplus \zz b_2'}{\langle \frac{8g+20}{\epsilon}(\epsilon a_1) - 8a_2' - b_2' \rangle} \cong \zz (\epsilon a_1) \oplus \zz a_2'. \]

Recall that in Lemma \ref{k4}, we showed $\Pic(\Hp_{4,g}) = \Pic(\Hp_{4,g}')$ for $g \geq 4$. By \cite[Proposition 2.15]{DP}, this group has rank at least $2$. Therefore, $\Pic(\Hp_{4,g}) = \zz \oplus \zz$, since any quotient would have smaller rank.
\end{proof}

\subsubsection{Genus $3$}
When $g=3$, we require a different argument, as Lemma \ref{g3lem} tells us that $\Hp_{4,3}' \subset \Hp_{4,3}$ is the complement of a divisor.  

\begin{proof}[Proof of Theorem \ref{Pic}(1) when $g=3$]
By Lemma \ref{g3lem}, the  kernel of the restriction map \[
\Pic(\Hp_{4,3}) \to \Pic(\Hp_{4,3}')\]
lies in the subgroup $\langle a_1, a_2' \rangle$. We know that $\Pic(\Hp'_{4,g})$ is generated by the classes $a_1, a_2', b_2'$, so it follows that $\Pic(\Hp_{4,3})$ is also generated by these $3$ classes.

Next, we claim that $b_2'$ is integrally expressible in terms of $a_1, a_2'$.
By Lemma \ref{allc}, we know $b_2'$ is expressible in terms of $a_1, a_2'$ in $\Pic(\Hp_{4,3}')$.
But, the kernel of $\Pic(\Hp_{4,3}) \to \Pic(\Hp_{4,3}')$ lies in $\langle a_1, a_2' \rangle$, so $b_2'$ must also be expressible in terms of $a_1, a_2'$ in $\Pic(\Hp_{4,3})$.

It follows that $\Pic(\Hp_{4,3})$ is a quotient of $\zz a_1 \oplus \zz a_2'$.
However, by \cite{DP}, we know that $\Pic(\Hp_{4,3})$ has rank $2$. Any further quotient would have lower rank, so we are done.
\end{proof}

\subsubsection{Genus $2$}\label{4genus2}
The proof of \cite[Proposition 2.15]{DP} (showing $\Pic(\Hp_{4,g})$ has rank $2$) does not go through when $g = 2$ because Deopurkar--Patel's test family $B_3$ has curves with disconnecting nodes, so it does not lie in their $\widetilde{\H}_{4,2}^{ns}$. However, their proof does establish that the rank of $\Pic(\Hp_{4,2})$ is at least $1$. This, together with Lemma \ref{hack}, provides enough information to determine the Picard group.

\begin{proof}[Proof of Theorem \ref{Pic}(1) when $g=2$]
We have already established that $\Pic(\Hp_{4,2})$ is generated by $2a_1$ and $a_2'$.  Using Example \ref{exc1}, we compute that (see \eqref{univdiagram} for definitions of the maps $f, \alpha, \pi$)
\[\lambda = c_1(f_*\omega_f) = c_1(\pi_*(\alpha_* \omega_\alpha) \otimes \omega_\pi) = c_1(\pi_*\E(-2)) = 4a_1 - a_2' = 2(2 a_1) - a_2'.\]
From this we see that $\lambda$ and $2a_1$ are generators for $\Pic(\Hp_{4,2})$. Since $\lambda$ is the generator of $\Pic(\M_2) \cong \zz/10\zz$, we see that $\Pic(\Hp_{4,2})$ is a quotient of $\zz \oplus \zz/10$. 

By the discussion at the start of this section, we know $\Pic(\Hp_{4,2})$ has rank at least $1$.
Thus, it remains to prove that $\Pic(\M_2) \to \Pic(\Hp_{4,2})$ is injective.

Let $\mathscr{P}^k$ be the universal Picard variety over $\M_2$ as in Section \ref{gb3} (the $\gg_m$-rigidification of the universal Picard stack $\mathscr{P}ic^k$).
As in \cite[Section 6]{Mochizuki}, the natural map $\Hp_{4,2} \to \mathscr{P}^4$ factors through a Grassmann fibration. For this, recall that every degree $4$ line bundle on a genus $2$ curve has a $3$-dimensional space of sections.
Let $\mathscr{G} \to \mathscr{P}^4$ be the Grassmann fibration 
parametrizing two-dimensional subspaces of the space of global sections of a degree $4$ line bundle. Then 
$\mathscr{H}_{4,2}$ sits naturally as an open substack $\mathscr{G}$.
Its complement $Z = \mathscr{G} \smallsetminus \Hp_{4,2}$ is the locus of pencils with a base point. Note that $Z$ has $1$-dimensional irreducible fibers over $\mathscr{P}^4$, so $Z$ is irreducible. Since $Z$ meets each fiber of $\mathscr{G} \to  \mathscr{P}^4$, it is not equivalent to the pullback of a divisor on $\mathscr{P}^4$. In particular, the map $\Pic(\mathscr{P}^4) \to \Pic(\Hp_{4,2})$ must be injective. Using Lemma \ref{hack}, we conclude that $\Pic(\M_2) \to \Pic(\Hp_{4,2})$ is also injective, completing the proof.
\end{proof}

\begin{rem}
Geometrically, the fact that $\Pic(\mathscr{H}_{4,2})$ has rank $1$ can be explained by the fact that $\Delta_{4,2}'$ is reducible. Thus, its components give rise to further relations beyond just its fundamental class.
\end{rem}

\subsection{Generating line bundles}\label{gb4}
Let $\epsilon = 1$ if $g$ is odd and $\epsilon = 2$ if $g$ is even. We have shown that $\Pic(\Hp_{4,g})$ is generated by $\epsilon a_1$ and $a_2'$, or equivalently by $\epsilon a_1$ and $\lambda := (g+2) a_1 - a_2'$. Let $\pi: \mathscr{P} \to \Hp_{4,g}$ be the universal $\pp^1$-fibration and $\mathscr{E}$ the universal rank $3$, degree $g+3$ vector bundle on $\mathscr{P}$. Recall that $\omega_{\pi}$ has relative degree $-2$. Line bundles generating $\Pic(\Hp_{4,g})$ are given by
\[\mathscr{L}_1 = \begin{cases} \pi_*\left(\det \mathscr{E} \otimes \omega_{\pi}^{\otimes (g+3)/2}\right) & \text{if $g$ odd} \\ \\
\pi_*\left((\det \mathscr{E})^{\otimes 2} \otimes \omega_{\pi}^{\otimes (g+3)} \right) & \text{if $g$ even} \end{cases} \qquad \text{which has} \qquad c_1(\mathscr{L}_1) =  \epsilon a_1\]
and
\[\mathscr{L}_2 = \det f_*(\omega_f) = \det \pi_*(\mathscr{E} \otimes \omega_{\pi}) \qquad\text{which has} \qquad c_1(\mathscr{L}_2) = \lambda = (g+2) a_1 - a_2'.\]

\subsection{Simple branching}
Let $T$ and $D$ be the divisors in $\Hp_{4,g}$ as in the introduction (see Figure \ref{TD}).
In \cite[Lemma 7.6]{part2}, we wrote $T$ and $D$ (pulled back to $\Pic(\H_{4,g})$) in terms of our generators $a_1$ and $a_2'$:
\[T = (24g + 60)a_1 - 24a_2' \qquad D = -(32g+80)a_1 + 36a_2'.\]
Note that the coefficient of $a_1$ is a multiple of $\epsilon$, as it must be, since these classes are defined in $\Pic(\Hp_{4,g}) \subseteq \Pic(\H_{4,g})$.

\begin{proof}[Proof of Corollary \ref{mc}(2)]
By excision, we have $\Pic(\mathscr{H}_{4,g}^s) = \Pic(\Hp_{4,g})/\langle T, D \rangle$.
 Row operations over $\zz$ diagonalize the change of basis matrix for $\epsilon a_1, a_2'$ to $T, D$, namely we have
\[\left(\begin{matrix} 3 & 2 \\
4&3\end{matrix}\right)
\left(\begin{matrix} (24g+60)/\epsilon & -24 \\
(-32g-80)/\epsilon & 36\end{matrix}\right) = 
\left(\begin{matrix} (8g+20)/\epsilon & 0 \\
0 & 12\end{matrix}\right),
\]
where the matrix on the left is invertible over $\zz$.
By its definition in \eqref{epsilondef}, we have $\epsilon = 1$ when $g$ is odd and $\epsilon = 2$ when $g$ is even, so
the corollary follows for $g \geq 3$. 

Finally, when $g = 2$, the above tells us $18(2a_1) = 0$ and $12a_2' = 0$, and we have the additional relation 
\[0 = 10\lambda = 10(4a_1 - a_2') \qquad \Rightarrow \qquad 0 = 2(2a_1 + a_2').\]
Using generators $2a_1 + a_2'$ and $2a_1$, we see that they generate cyclic groups of order $2$ and $18$ respectively.
\end{proof}

\section{Pentagonal}\label{pentagonal}
We begin by recalling the linear algebraic data associated to degree $5$ covers, as developed by Casnati \cite{C}. For more details in our context, see \cite[Section 3.3]{part1}. To a degree $5$, cover $\alpha: C \to \pp^1$, we again associate two vector bundles on $\pp^1$:
\[E_\alpha := (\alpha_*\O_C/\O_{\pp^1})^\vee = \ker(\alpha_*\omega_\alpha \to \O_{\pp^1}) \qquad \text{and} \qquad F_\alpha := \ker(\Sym^2 E_\alpha \to \alpha_*\omega_{\alpha}^{\otimes 2}).\]
If $C$ has genus $g$, then $E_\alpha$ has degree $g+4$, and rank $4$, while $F_\alpha$ has degree $2g+8$ and rank $5$.
Geometrically, the curve $C$ is embedded in $\gamma:\p( E^{\vee}_{\alpha}\otimes \det E_{\alpha})\rightarrow \p^1$, which further maps to $\p(\wedge^2 F_{\alpha})$ via an associated section
\[
\eta_{\alpha}\in H^0(\p^1,\H om(E_{\alpha}^{\vee}\otimes \det E_{\alpha},\wedge^2 F_{\alpha})).
\]
The curve $C$ is obtained as the intersection of the image of $\p( E^{\vee}_{\alpha}\otimes \det E_{\alpha})$ with the Grassmann bundle $G(2,F_{\alpha}) \subset \pp(\wedge^2 F_\alpha)$. The cover $\alpha$ also determines an isomorphism $\phi_{\alpha}:\det E_{\alpha}^{\otimes 2}\rightarrow \det F_{\alpha}$, \cite[p. 10]{part1}.

The association of $\alpha:C\rightarrow \p^1$ with the triple $(E_{\alpha}, F_{\alpha}, \phi_\alpha)$ gives rise to a map $\Hp_{5,g}\rightarrow \mathscr{B}_{5,g}$, for the base stack $\mathscr{B}_{5,g}$ defined in \ref{b5d}. Just like in the degree $4$ case, the map $\Hp_{5,g}\rightarrow \mathscr{B}_{5,g}$ does not factor through a vector bundle over $\mathscr{B}_{5,g}$, but an open substack $\Hp_{5,g}'$ does. When $g\neq 3$, we will show that the complement of $\Hp_{5,g}'$ in $\Hp_{5,g}$ has codimension at least $2$, so it will suffice to compute $\Pic(\Hp_{5,g})$. We will then deal with the $g=3$ case separately.
\subsection{The open substack $\Hp'_{5,g}$}
First, define $\mathscr{B}'_{5,g}:=\mathscr{B}_{5,g}\smallsetminus R^1\pi_*(\H om(\mathscr{E}^\vee \otimes \det \mathscr{E}, \wedge^2 \mathscr{F}))$. Let $\Hp'_{5,g}$ be the base change of $\Hp_{5,g}\rightarrow \mathscr{B}_{5,g}$ along the open embedding $\mathscr{B}'_{5,g}\hookrightarrow \mathscr{B}_{5,g}$. Arguing exactly as in \cite[Lemma 5.3]{part1}, the morphism $\Hp'_{5,g}\rightarrow \mathscr{B}'_{5,g}$ factors through the total space of a vector bundle over $\mathscr{B}'_{5,g}$:
\[
\Hp'_{5,g}\hookrightarrow \mathscr{X}'_{5,g}:=\pi_*(\H om(\mathscr{E}^\vee \otimes \det \mathscr{E}, \wedge^2 \mathscr{F}))|_{\mathscr{B}'_{5,g}}.
\]

\begin{lem} \label{c5}
Suppose $g \neq 3$. Then, every component of the complement of $\Hp'_{5,g}\subset \Hp_{5,g}$ has codimension at least $2$. In particular, $\Pic(\Hp_{5,g}) = \Pic(\Hp_{5,g}')$
\end{lem}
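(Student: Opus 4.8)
The plan is to follow the proof of Lemma \ref{k4} almost verbatim, replacing the degree $4$ linear-algebra data by the degree $5$ data and Deopurkar--Patel's degree $4$ count of divisorial strata by its degree $5$ counterpart. Following \cite{DP}, for a rank $4$ degree $g+4$ bundle $E$ and a rank $5$ degree $2g+8$ bundle $F$ on $\pp^1$, write $M(E,F)\subset\Hp_{5,g}$ for the locus of covers $\alpha$ with $E_\alpha\cong E$ and $F_\alpha\cong F$. By the construction of $\mathscr{B}'_{5,g}$, the complement of $\Hp'_{5,g}\subset\Hp_{5,g}$ is the union of those $M(E,F)$ with $h^1(\pp^1,\H om(E^\vee\otimes\det E,\wedge^2 F))>0$. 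Since $\H om(E^\vee\otimes\det E,\wedge^2 F)\cong E\otimes(\det E)^{-1}\otimes\wedge^2 F$ is a sum of line bundles, writing $E=\bigoplus_i\O(e_i)$ and $F=\bigoplus_j\O(f_j)$ in increasing order, its minimal-degree summand is $\O(e_1+f_1+f_2-(g+4))$, so this $h^1$ is nonzero precisely when $e_1+f_1+f_2\le g+2$.

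First I would check that the balanced bundles $E_{\gen},F_{\gen}$ of the relevant ranks and degrees satisfy $e_1+f_1+f_2\ge g+3$, so $M(E_{\gen},F_{\gen})\subseteq\Hp'_{5,g}$; this is a short computation over the residues of $g$ modulo $4$ and modulo $5$. It then follows that every divisorial component of $\Hp_{5,g}\smallsetminus\Hp'_{5,g}$ is a divisorial component of $\Hp_{5,g}\smallsetminus M(E_{\gen},F_{\gen})$. Next I would invoke the degree $5$ enumeration of these components in \cite{DP} (the analogue of \eqref{oo}--\eqref{ooo}): for $g\neq3$ they are $\overline{M(E_{\gen},F_1)}$, defined when $5\mid2g+8$, and $\overline{M(E_1,F_{\gen})}$, defined when $4\mid g+4$, where $F_1$ (respectively $E_1$) is obtained from the balanced bundle by raising its top summand and lowering its bottom summand by one; as in Lemma \ref{k4} we use the irreducibility of the relevant $M$ and $CE$ strata from \cite[Propositions 4.5 and 4.7]{DP} to write them as closures. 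It remains to verify $e_1+f_1+f_2\ge g+3$ for the pairs $(E_{\gen},F_1)$ and $(E_1,F_{\gen})$ — again a finite check over residues of $g$ — so that these divisors lie in $\Hp'_{5,g}$. Combining, $\Hp_{5,g}\smallsetminus\Hp'_{5,g}$ has no divisorial component when $g\neq3$, hence has codimension at least $2$, and therefore $\Pic(\Hp_{5,g})=\Pic(\Hp'_{5,g})$.

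I expect the main obstacle to be the transcription of Deopurkar--Patel's enumeration of divisorial strata into degree $5$ and the verification that, for $g\neq3$, the two ``one-off'' families $\overline{M(E_{\gen},F_1)}$ and $\overline{M(E_1,F_{\gen})}$ really exhaust them. The cohomology vanishings themselves are elementary, but the inequality $e_1+f_1+f_2\ge g+3$ becomes an equality in several small genera — already for the balanced bundles when $g=3$, which is exactly why $g=3$ is excluded and why an additional splitting-type divisor (compare \eqref{weird3} in the tetragonal case) must be dealt with separately — so some care is needed to make sure no divisorial stratum or boundary case is missed.
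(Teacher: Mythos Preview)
Your proposal is correct and follows essentially the same approach as the paper's own proof: stratify the complement by $M(E,F)$, translate the $h^1$ condition into the inequality $e_1+f_1+f_2-(g+4)\le -2$, check the balanced and one-off bundles, and invoke Deopurkar--Patel's enumeration of divisorial strata. One small correction: the irreducibility results you need from \cite{DP} in degree $5$ are Propositions 5.1 and 5.2, not 4.5 and 4.7 (those are the degree $4$ statements).
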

\begin{proof}
Following the notation of \cite{DP},
let $M(E, F) \subset \Hp_{5,g}$ denote the locus of covers $\alpha$ with $E_\alpha \cong E$ and $F_\alpha \cong F$. 
The complement of $\Hp'_{5,g}\subset \Hp_{5,g}$ is the union of $M(E, F)$ such that
\begin{equation} \label{cc}
h^1(E \otimes \det E^\vee \otimes \wedge^2 F^\vee) > 0.
\end{equation}
If $E = \O(e_1) \oplus \cdots \oplus \O(e_4)$ with $e_1 \leq \cdots \leq e_4$ and $F = \O(f_1) \oplus \cdots \oplus \O(f_5)$ with $f_1 \leq \cdots \leq f_5$, then \eqref{cc} is equivalent to $e_1 + f_1 + f_2 - (g+4) \leq -2$.

Next, let
$E_{\gen}$, respectively $F_{\gen}$, denote the balanced bundle of rank $4$ and degree $g+4$, respectively rank $5$ and degree $2g+8$.
First note that $h^1(E_{\gen} \otimes \det E_{\gen}^\vee \otimes \wedge^2 F_{\gen}^\vee) = 0$, as in this case, $e_1 + f_1 + f_2 - (g+4) \geq \lfloor \frac{g+4}{4} \rfloor + 2 \lfloor \frac{2(g+4)}{5} \rfloor - (g+4) \geq -1$ (except when $g = 3$, in which case $f_2 = \lceil \frac{2(g+4)}{5}\rceil$, so we still have $e_1 + f_1+f_2 \geq -1$.)
This says that $M(E_{\gen}, F_{\gen}) \subseteq \Hp_{5,g}'$. Hence, any divisorial component of $\Hp_{5,g} \smallsetminus \Hp_{5,g}'$ is contained in a divisorial component of $\Hp_{5,g} \smallsetminus M(E_{\gen}, F_{\gen})$.

Again, let us define bundles that are ``one-off" from balanced
\begin{align*}
    F_1 &:= \O(n-1) \oplus \oplus \O(n)^{\oplus 3} \O(n+1) & \qquad & \text{if $n = \frac{2g+8}{5}$ is an integer} \\
    E_1 &:= \O(m-1) \oplus \O(m)^{\oplus 2} \oplus \O(m+1) & \qquad &\text{if $m = \frac{g+4}{4}$ is an integer.}
\end{align*}
In \cite[p. 25]{DP}, Deopurkar--Patel enumerate the divisorial components of $\Hp_{5,g} \smallsetminus M(E_{\gen}, F_{\gen})$ and show that, for $g \neq 3$, they are always of the form \begin{align*}
    &\overline{M(E_{\gen}, F_1)} &\qquad &\text{if $5 \mid 2g+8$} \\
    &\overline{M(E_1, F_{\gen})} &\qquad &\text{if $4 \mid g+4$}.
\end{align*}
Note that we are using the irreducibility of $M$ and $CE$ in \cite[Propositions 5.1 and 5.2]{DP} to write these divisors as the closures above.
One readily checks that
\[h^1(E_{\gen} \otimes \det E^\vee_{\gen} \otimes \wedge^2 F_1) = h^1(E_1 \otimes \det E_1^\vee \otimes \wedge^2 F_{\gen}) = 0\]
Hence, when they are defined, $\Hp_{5,g}'$ contains each of 
\[M(E_{\gen}, F_{\gen}), \quad M(E_{\gen}, F_1), \quad M(E_1, F_{\gen}),\]
and all other possible $M(E, F)$ have codimension at least $2$.
\end{proof}

\begin{lem}\label{g3prime}
When $g=3$, the complement of $\Hp'_{5,3}\subset \Hp_{5,3}$ is an irreducible divisor whose class lies in the subgroup generated by $2 a_1$ and $a_2'$.
\end{lem}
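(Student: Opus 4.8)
The plan is to follow the strategy of Lemma~\ref{g3lem}. First I would pin down the complement. As in the proof of Lemma~\ref{c5}, the complement of $\Hp'_{5,3}\subset\Hp_{5,3}$ is the union of the loci $M(E,F)$ on which $R^1\pi_*\bigl(\H om(\mathscr E^\vee\otimes\det\mathscr E,\wedge^2\mathscr F)\bigr)\neq 0$; writing $E=\bigoplus_{i=1}^4\O(e_i)$ and $F=\bigoplus_{j=1}^5\O(f_j)$ with $e_1\le\cdots\le e_4$ and $f_1\le\cdots\le f_5$, this is the condition $e_1+f_1+f_2\le g+2=5$. Since the cover has connected fibres, $h^0(\pp^1,E_\alpha^\vee)=0$ and hence every $e_i\ge 1$; as $\deg E_\alpha=7$ this forces $e_1=1$, so the complement is cut out by $f_1+f_2\le 4$, i.e.\ $f_1=f_2=2$. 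For $g=3$ the balanced bundles have $f_1+f_2=5$, so $M(E_{\gen},F_{\gen})\subseteq\Hp'_{5,3}$, and combining the enumeration in \cite{DP} of the divisorial components of $\Hp_{5,3}\smallsetminus M(E_{\gen},F_{\gen})$ (for $g=3$ neither $E_1$ nor $F_1$ is defined, so an extra component appears) with the numerical constraints on $F_\alpha$ from \cite{part1}, I would identify the unique divisorial component as
\[
Y:=\overline{M\bigl(\O(1)\oplus\O(2)^{\oplus 3},\ \O(2)^{\oplus 2}\oplus\O(3)^{\oplus 2}\oplus\O(4)\bigr)},
\]
with every other stratum of the complement of codimension at least $2$. (Concretely, for this stratum the relevant splitting locus of $\mathscr F$ has codimension $2$ in $\mathscr B_{5,3}$, while $h^1\bigl(\pp^1,E\otimes\det E^\vee\otimes\wedge^2 F\bigr)=1$, so the stratum becomes a divisor in $\Hp_{5,3}$.) Irreducibility of $Y$ then follows from that of $M(E_{\gen},F')$, which is part of \cite[Propositions~5.1 and~5.2]{DP}, exactly as in Lemma~\ref{c5}.

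It then remains to show $[Y]$ lies in the subgroup generated by $2a_1$ and $a_2'$. Unlike in the tetragonal case, $Y$ is \emph{not} the pullback of a divisor from $\mathscr B_{5,3}$ — the locus where $\mathscr F$ is fibrewise unbalanced has codimension $2$ there, and $Y$ is a divisor only because the fibres of $\Hp_{5,3}\to\mathscr B_{5,3}$ jump in dimension by $h^1=1$ over it — so one cannot simply apply the universal splitting-loci formulas of \cite{L} as in Lemma~\ref{g3lem}. Instead I would compute $[Y]$ via the principal-parts presentation of $\Hp_{5,3}$ from \cite[Section~5]{part2}: near $Y$, the stack $\Hp_{5,3}$ is an open substack of the zero scheme of a section of a vector bundle assembled from $\pi$-pushforwards of twists of $\mathscr E$ and $\mathscr F$, with $Y$ the degeneracy locus where the principal-parts map drops rank. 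Once one checks — as in the remark following Lemma~\ref{allc}, by stratifying the base by loci where the rank drops and verifying that the locus of rank drop $\delta$ has codimension greater than $\delta$ — that $Y$ occurs in the expected codimension, the Thom--Porteous-type formula of \cite{part2} computes $[Y]$ as a polynomial in the Chern classes of $\mathscr E$ and $\mathscr F$. Feeding in the relation $\det\mathscr F\cong(\det\mathscr E)^{\otimes 2}$ valid on $\mathscr B_{5,g}$, together with the relation of \cite{part2} expressing $b_2'$ in terms of $a_1$ and $a_2'$ in $\Pic(\H_{5,g})$ (cf.\ Lemma~\ref{allc}) — whose $a_1$-coefficient is divisible by $\epsilon=2$, so that it is a relation among $2a_1,a_2',b_2'$ and hence, by the injectivity of $\Pic(\Hp_{5,3})\to\Pic(\H_{5,3})$, already holds in $\Pic(\Hp_{5,3})$ — one rewrites $[Y]$ as an integral combination of $2a_1$ and $a_2'$.

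The main obstacle is this last step: carrying out the codimension check that licenses the class formula (equivalently, that the principal-parts construction genuinely presents all of $\Hp_{5,3}$, not just the open $\Hp'_{5,3}$, up to the correct codimension), and then verifying that the $b_2'$-coefficient of the resulting expression vanishes. The remaining ingredients — the numerical identification of $Y$ and its irreducibility — are straightforward adaptations of the tetragonal arguments in Lemmas~\ref{c5} and~\ref{g3lem}.
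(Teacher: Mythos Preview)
Your identification of the divisor $Y=\overline{M\bigl(\O(1)\oplus\O(2)^{\oplus 3},\ \O(2)^{\oplus 2}\oplus\O(3)^{\oplus 2}\oplus\O(4)\bigr)}$ and the argument for its irreducibility are fine and match the paper. The difficulty is entirely in the second half, and there the proposal has a genuine gap.

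The principal-parts construction from \cite{part2} lives on $\mathscr X'_{5,g}$ (or $\mathcal X'_{5,g}$) and its degeneracy locus computes $[\mathscr D_{5,g}]$, the class of the complement of $\Hp'_{5,g}$ \emph{inside the vector bundle} $\mathscr X'_{5,g}$. It does not present $\Hp_{5,3}$ near $Y$, and there is no natural vector bundle map on $\Hp_{5,3}$ whose degeneracy locus is $Y$; indeed you yourself observe that $Y$ is not pulled back from $\mathscr B_{5,3}$, so no formula in Chern classes of $\mathscr E,\mathscr F$ on the base can produce it. Your ``main obstacle'' is not a codimension check but the nonexistence of the construction you want to invoke. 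And even if you had an expression $[Y]=\alpha a_1+\beta a_2'+\gamma b_2'$, substituting $b_2'=66a_1-7a_2'$ gives $(\alpha+66\gamma)a_1+(\beta-7\gamma)a_2'$, which lies in $\langle 2a_1,a_2'\rangle$ only if $\alpha$ is even --- a parity you have no way to verify without an actual formula.

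The paper avoids all of this with a single geometric observation you are missing: by \cite[Proposition~5.2]{DP}, the stratum $Y$ is \emph{exactly} the preimage of the hyperelliptic locus under $\Hp_{5,3}\to\M_3$. Di~Lorenzo \cite{DL} computes that locus to have class $9\lambda$, and since $\lambda=c_1(\pi_*\mathscr E(-2))=6a_1-a_2'$ is pulled back from $\mathscr B_{5,3}$, one gets immediately
\[
[Y]=9\lambda=54a_1-9a_2'=27(2a_1)-9a_2'\in\langle 2a_1,a_2'\rangle.
\]
So the missing idea is to recognise $Y$ as a pullback from $\M_3$ rather than trying to force a degeneracy-locus description on $\Hp_{5,3}$.
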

\begin{proof}
The divisorial component of the complement of $\Hp_{5,3}'$ inside $\Hp_{5,3}$ is the locus
\[
\overline{M(E_1, F_1)} = \overline{M(\O(1)\oplus\O(2)^{\oplus 3},\O(2)^{\oplus 2}\oplus \O(3)^{\oplus 2}\oplus \O(4))}.
\]
By \cite[Proposition 5.2]{DP}, this locus is precisely the preimage of the hyperelliptic locus under the natural morphism $\Hp_{5,3}\rightarrow \M_3$. By \cite{DL}, the hyperelliptic locus in $\M_3$ has class $9\lambda$. 
The class $\lambda = c_1(f_*\omega_f) = c_1(\pi_* \mathscr{E} \otimes \omega_{\pi}) \in \Pic(\mathscr{H}_{5,3})$ is pulled back from $\Pic(\mathscr{B}_{5,3})$. Since $\Pic(\mathscr{B}_{5,3})$ includes into $\Pic(\mathcal{B}_{5,3})$, we can determine this class via a calculation on the $\SL_2$ quotient, as in Example \ref{exc1}:
\[
[\overline{M(E_1, F_1)}] = 9\lambda=9c_1(f_*\omega_f)=9c_1(\pi_*\E(-2))=54a_1-9a_2' = 27(2 a_1) - 9a_2',
\]
which is in the span of $2a_1$ and $a_2'$.
\end{proof}

\subsection{Excision} 
We proceed similarly to the $k=4$ case. Let $\mathscr{D}_{5,g}\subset \mathscr{X}'_{5,g}\smallsetminus \Hp'_{5,g}$. There is a series of surjections
\begin{equation}
    \Pic(\mathscr{B}_{5,g})\rightarrow \Pic(\mathscr{B}'_{5,g})\cong\Pic(\mathscr{X}'_{5,g})\rightarrow \Pic(\Hp'_{5,g}).
\end{equation}
The middle map is an isomorphism because $\mathscr{X}'_{5,g}\rightarrow \mathscr{B}'_{5,g}$ is a vector bundle. 
We have defined a sequence of morphisms
\begin{equation} \label{scr5}\mathscr{D}_{5,g} \to \mathscr{X}_{5,g}' \rightarrow \mathscr{B}_{5,g}' \rightarrow \mathscr{B}_{5,g} \rightarrow \mathscr{V}_{4,g+4} \times_{\BPGL_2} \mathscr{V}_{5,2g+8} \rightarrow \BPGL_2. 
\end{equation}
\begin{lem}\label{d5}
For $g\geq 2$, some combination of the components of $\mathscr{D}_{5,g}$ has class
\[
\frac{(10g+36)}{\epsilon}(\epsilon a_1)-7a_2'-b_2'\in \Pic(\mathscr{X}'_{5,g})\cong \Pic(\mathscr{B}'_{5,g}).
\]
\end{lem}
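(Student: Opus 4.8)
The plan is to run the argument of Lemma~\ref{allc} one degree higher. First I would base change the sequence~\eqref{scr5} along $\BSL_2 \to \BPGL_2$ to obtain the sequence
\[\Delta'_{5,g} \to \mathcal{X}'_{5,g} \to \mathcal{B}'_{5,g} \to \mathcal{B}_{5,g} \to \mathcal{V}_{4,g+4} \times_{\BSL_2} \mathcal{V}_{5,2g+8} \to \BSL_2,\]
where $\Delta'_{5,g}$ is the complement of the open inclusion $\H'_{5,g} \hookrightarrow \mathcal{X}'_{5,g}$ obtained as in \cite[Lemma 5.3]{part1}. Every component of $\mathscr{D}_{5,g}$ pulls back to a component of $\Delta'_{5,g}$, and the pullback $\Pic(\mathscr{X}'_{5,g}) \to \Pic(\mathcal{X}'_{5,g})$ sends its class to the class of that component downstairs. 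Hence it is enough to show (i) this pullback is injective, so that a given $\zz$-combination of components is recorded by the same formula in both groups, and (ii) some $\zz$-combination of the components of $\Delta'_{5,g}$ has class $(10g+36)a_1 - 7a_2' - b_2'$ in $\Pic(\mathcal{X}'_{5,g}) \cong \Pic(\mathcal{B}_{5,g})$.

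For (i) I would propagate injectivity exactly as in the degree $3$ and $4$ cases. The map $\Pic(\mathscr{B}_{5,g}) \to \Pic(\mathcal{B}_{5,g})$ is injective: as in the derivation of~\eqref{bp5} from Lemmas~\ref{gm} and~\ref{int-pic}, both sides are free $\zz$-modules and the generators $\epsilon a_1, a_2', b_2'$ of the left-hand group are sent to $\epsilon a_1, a_2', b_2'$, which span a finite-index subgroup of the free module $\Pic(\mathcal{B}_{5,g})$. Since $\mathcal{B}_{5,g} \to \mathscr{B}_{5,g}$ is induced by the base change $\BSL_2 \to \BPGL_2$ (hence smooth) and $\mathscr{B}'_{5,g} \subset \mathscr{B}_{5,g}$ is open, Lemma~\ref{comparison} gives injectivity of $\Pic(\mathscr{B}'_{5,g}) \to \Pic(\mathcal{B}'_{5,g})$; and since $\mathcal{X}'_{5,g} \to \mathscr{X}'_{5,g}$ is the pullback of the vector bundle $\mathscr{X}'_{5,g} \to \mathscr{B}'_{5,g}$, homotopy invariance upgrades this to injectivity of $\Pic(\mathscr{X}'_{5,g}) \to \Pic(\mathcal{X}'_{5,g})$.

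For (ii) the input is the relation $(10g+36)a_1 - 7a_2' - b_2' = 0$ in $\Pic(\H_{5,g})$, the degree-$5$ counterpart of \cite[Lemma 5.2 and Equation 5.7]{part2}. Since this relation holds in $\Pic(\H_{5,g})$, it holds on the open substack $\H'_{5,g} \subset \H_{5,g}$, and $\H'_{5,g} \cong \mathcal{X}'_{5,g} \smallsetminus \Delta'_{5,g}$. By excision, any relation among $a_1, a_2', b_2'$ that becomes trivial on $\Pic(\H'_{5,g})$ is the class of a cycle supported on $\Delta'_{5,g}$. Thus $(10g+36)a_1 - 7a_2' - b_2'$ equals a $\zz$-combination of the components of $\Delta'_{5,g}$ in $\Pic(\mathcal{X}'_{5,g})$, and transporting this combination through the injection of (i) gives the asserted class for the corresponding combination of components of $\mathscr{D}_{5,g}$.

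I expect the main obstacle to be locating, or if necessary re-deriving, the relation $(10g+36)a_1 - 7a_2' - b_2' = 0$ in $\Pic(\H_{5,g})$ with precisely these coefficients. If it is not available verbatim from \cite{part2}, it can be produced by computing $\ch\big(\pi_*(\H om(\mathscr{E}^\vee \otimes \det\mathscr{E}, \wedge^2\mathscr{F}))\big)$ on $\mathcal{B}'_{5,g}$ via Grothendieck--Riemann--Roch (using $c_1(\mathscr{E}) = \pi^*a_1 + (g+4)z$, the analogous expansion for $\mathscr{F}$, the relation $b_1 = 2a_1$ imposed on $\mathcal{B}_{5,g}$, and $z^2 = -\pi^*c_2$) and then matching the result against the expected codimension-one part of the locus where the section $\eta_\alpha$ fails to cut out a smooth connected genus $g$ curve inside the Grassmann bundle $G(2,\mathscr{F})$. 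As in the degree $4$ remark, one could further upgrade ``a combination of components'' to ``the fundamental class of $\mathscr{D}_{5,g}$'' by verifying this degeneracy locus has the expected codimension, but that refinement is not needed for the statement as phrased.
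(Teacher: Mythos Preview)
Your proposal is correct and follows essentially the same approach as the paper's proof: base change along $\BSL_2 \to \BPGL_2$, use the relation $(10g+36)a_1 - 7a_2' - b_2' = 0$ in $\Pic(\H_{5,g})$, apply excision on $\H'_{5,g} = \mathcal{X}'_{5,g} \smallsetminus \Delta'_{5,g}$, and transport back via injectivity. The only discrepancy is the citation for the key relation: the paper points to \cite[Lemma 6.6 and Equation 6.19]{part2} rather than the degree-$4$ references you guessed, so your contingency GRR computation is unnecessary.
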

\begin{proof}
Base changing \eqref{scr5} by $\BSL_2 \to \BPGL_2$, we obtain the stacks and morphisms considered in \cite[Section 5.3]{part1} (below $\Delta_{5,g}'$ is the complement of the open inclusion $\H_{5,g}' \hookrightarrow \X_{5,g}'$ of \cite[Lemma 5.11]{part1}):
\[\Delta_{5,g}' \to \mathcal{X}_{5,g}' \rightarrow \mathcal{B}_{5,g}' \rightarrow \mathcal{B}_{5,g} \rightarrow \mathcal{V}_{4,g+4} \times_{\BSL_2} \mathcal{V}_{5,2g+8} \rightarrow \BSL_2. \]
By \cite[Lemma 6.6 and Equation 6.19]{part2}, the relation $\frac{(10g+36)}{\epsilon}(\epsilon a_1)-7a_2'-b_2'=0$ holds in $\Pic(\H_{5,g})$, so it must also hold on the open substack $\H'_{5,g}$. Since $\H'_{5,g}=\mathcal{X}'_{5,g}\smallsetminus \Delta_{5,g}'$, any relation among $a_1, a_2', b_2'$ on $\H'_{5,g}$ must come from a class supported on $\Delta_{5,g}'$. Therefore, some combination of components of $\Delta_{5,g}'$ has class $(10g+36)a_1-7a_2'-b_2'$. The corresponding combination of components on $\mathscr{D}_{5,g}$ has the same class.
\end{proof}
\begin{proof}[Proof of Theorem 1.1(3) when $g\geq 4$]
By Lemma \ref{c5}, $\Pic(\Hp_{5,g})\cong \Pic(\Hp_{5,g}')$ for $g\geq 4$. We have that $\Pic(\Hp'_{5,g})$ is a quotient of $\Pic(\mathscr{X}'_{5,g})\cong \Pic(\mathscr{B}'_{5,g})$ by classes supported on $\mathscr{D}_{5,g}$. By Lemma \ref{d5}, one such class is $(10g+36)a_1-7a_2'-b_2'$. Therefore, $\Pic(\Hp'_{5,g})$ is a quotient of 
\[
\frac{\zz (\epsilon a_1) \oplus \zz a_2' \oplus \zz b_2'}{\langle \frac{10g+36}{\epsilon}(\epsilon a_1) - 7a_2' - b_2' \rangle} \cong \zz (\epsilon a_1) \oplus \zz a_2'.
\]
By \cite[Proposition 2.15]{DP}, the rank of $\Pic(\Hp_{5,g})\otimes \qq$ is at least $2$, so we must have that $\Pic(\Hp_{5,g})\cong \zz(\epsilon a_1)\oplus \zz a_2'$.
\end{proof}
\subsubsection{Genus 3}
As in the $k=4$ case, when $g=3$, we require a different argument because Lemma \ref{g3prime} tells us that $\Hp_{5,3}' \subset \Hp_{5,3}$ is the complement of a divisor.  
\begin{proof}[Proof of Theorem \ref{Pic}(2) when $g=3$]
By Lemma \ref{g3prime}, the  kernel of the restriction map \[\Pic(\Hp_{5,3}) \to \Pic(\Hp_{5,3}')\] lies in the subgroup $\langle 2a_1, a_2' \rangle$. We know that $\Pic(\Hp'_{5,g})$ is generated by the classes $a_1, a_2', b_2'$, so it follows that $\Pic(\Hp_{5,3})$ is also generated by these $3$ classes.

Next, we claim that $b_2'$ is integrally expressible in terms of $a_1, a_2'$.
By Lemma \ref{d5}, we know $b_2'$ is expressible in terms of $a_1, a_2'$ in $\Pic(\Hp_{5,3}')$.
But, the kernel of $\Pic(\Hp_{5,3}) \to \Pic(\Hp_{5,3}')$ lies in $\langle a_1, a_2' \rangle$, so $b_2'$ must also be expressible in terms of $a_1, a_2'$ in $\Pic(\Hp_{5,3})$.

It follows that $\Pic(\Hp_{5,3})$ is a quotient of $\zz a_1 \oplus \zz a_2'$.
However, by \cite{DP}, we know that $\Pic(\Hp_{5,3})\otimes \qq$ has rank $2$, so $\Pic(\Hp_{5,3})\cong \zz\oplus \zz$.
\end{proof}
\subsubsection{Genus 2}
As in Section \ref{4genus2}, the argument of Deopurkar--Patel \cite[Proposition 2.15]{DP} in genus $2$ establishes that the rank of $\Pic(\Hp_{5,2})$ is at least $1$.

\begin{proof}[Proof of Theorem \ref{Pic}(2) when $g=2$]
We have already established that $a_1, a_2'$ are generators for $\Pic(\Hp_{5,2})$. We compute directly $\lambda = 5a_1 - a_2'$ as in Section \ref{4genus2}, from which we see $\lambda$ and $a_1$ are generators for  $\Pic(\Hp_{5,2})$.
Arguing as in Section \ref{4genus2}, it suffices to show that the map $\Pic(\M_2) \to \Pic(\Hp_{5,2})$ is injective.

Every degree $5$ line bundle on a genus $2$ curve has a $4$-dimensional space of sections. The Hurwitz space $\Hp_{5,2}$ then sits naturally as an open inside the Grassmann fibration $\mathscr{G} \to \mathscr{P}^5$ parametrizing $2$-dimensional subspaces of the space of global sections of a degree $5$ line bundle.
The complement of $\mathscr{H}_{5,2} \subset \mathscr{G}$ is the locus of pencils with a base point, which we again see is irreducible and not equivalent to the pullback of a divisor on $\mathscr{P}^5$. In particular, the map $\Pic(\mathscr{P}^5) \to \Pic(\Hp_{5,2})$ must be injective. Applying Lemma \ref{hack}, we conclude that $\Pic(\M_2) \to \Pic(\Hp_{5,2})$ is also injective, completing the proof.
\end{proof}

\subsection{Generating line bundles}\label{gb5}
Line bundles generating $\Pic(\Hp_{5,g})$ are given by
\[\mathscr{L}_1 = \begin{cases} \pi_*\left(\det \mathscr{E} \otimes \omega_{\pi}^{\otimes (g+4)/2}\right) & \text{if $g$ even} \\ \\
\pi_*\left((\det \mathscr{E})^{\otimes 2} \otimes \omega_{\pi}^{\otimes (g+4)} \right) & \text{if $g$ odd} \end{cases} \qquad \text{which has} \qquad c_1(\mathscr{L}_1) =  \epsilon a_1\]
and
\[\mathscr{L}_2 = \det f_*(\omega_f) = \det \pi_*(\mathscr{E} \otimes \omega_{\pi}) \qquad\text{which has} \qquad c_1(\mathscr{L}_2) = \lambda = (g+3) a_1 - a_2'.\]

\subsection{Simple branching} Let $T$ and $D$ be as in Figure \ref{TD}. In \cite[Lemma 7.10]{part2}, we wrote the classes of $T$ and $D$ in terms of our generators $a_1$ and $a_2'$:
\[T =(24g + 84)a_1-24a_2' \qquad D = -(32g + 112)a_1 + 36a_2'.\]
Note that the coefficient of $a_1$ is a multiple of $\epsilon$, as it must be because these classes are defined in $\Pic(\Hp_{5,g}) \subseteq \Pic(\H_{5,g})$.
\begin{proof}[Proof of Corollary \ref{mc}(3)]
By excision $\Pic(\mathscr{H}_{5,g}^s) = \Pic(\Hp_{5,g})/\langle T, D \rangle$.
 Again, row operations over $\zz$ diagonalize the change of basis matrix for $\epsilon a_1, a_2'$ to $T, D$:
\[\left(\begin{matrix} 3 & 2 \\
4&3\end{matrix}\right)
\left(\begin{matrix} (24g+84)/\epsilon & -24 \\
(-32g-112)/\epsilon & 36\end{matrix}\right) = 
\left(\begin{matrix} (8g+28)/\epsilon & 0 \\
0 & 12\end{matrix}\right).
\]
For $g \geq 3$, these are the only relations, so $\Pic(\Hp_{5,g}^s)$ is the sum of two cyclic groups of orders equal to the diagonal entries above.

In genus $2$, the above gives $44a_1 = 0$ and $12a_2' =0$, and we have the additional relation 
\[0 = 10\lambda = 10(5a_1 - a_2') \qquad \Rightarrow \qquad 0 = 2(3a_1 + a_2')\]
The generators $a_1$ and $3a_1+a_2'$ generate cyclic groups of order $44$ and $2$ respectively.
\end{proof}

\bibliographystyle{amsplain}
\bibliography{refs}
\end{document}